\newcommand{\Salem}[4]{S^{(#1)}_{#2,#3,#4}}
\newcommand{\cox}[1]{\operatorname{cox}_{#1}(t)}
\newcommand{\coxwo}[1]{\operatorname{cox}_{#1}(t)}
\newcommand{\cal}[1]{\mathcal{#1}}
\newcommand{\ad}{\operatorname{Ad}}
\newcommand{\rdots}[1]{%
  \hbox to .2em{\hss {\rotatebox[origin=c]{#1}{$\displaystyle\cdots$}}\hss}}
\newtheorem{thm}{Theorem}[section]
\newtheorem{cor}[thm]{Corollary}
\newtheorem{lem}[thm]{Lemma}
\newtheorem{proposition}[thm]{Proposition}
\theoremstyle{definition}
\newtheorem{defin}[thm]{Definition}
\newtheorem{rem}[thm]{Remark}
\newtheorem{exa}[thm]{Example}
\numberwithin{equation}{section}
\crefname{proposition}{proposition}{propositions}
\Crefname{proposition}{Proposition}{Propositions}
\crefname{thm}{theorem}{theorems}
\Crefname{thm}{Theorem}{Theorems}
\renewcommand{\cref}{\Cref}
\begin{document}
\title{Coxeter polynomials of Salem trees}

\author{Charalampos A. Evripidou}
\email{\vspace*{-1ex} evripidou.charalambos@ucy.ac.cy}
\thanks{This work was co-funded by the European Regional Development Fund and the Re\-pu\-blic of Cyprus through the Research Promotion Foundation (Project: PENEK/0311/30).}

\address{Department of Mathematics and Statistics, University of Cyprus, P.O. Box 20537, 1678 Nicosia, Cyprus}
\date{}

\begin{abstract}
We compute the Coxeter polynomial of a family of Salem trees
and also the limit of the spectral radius of their Coxeter transformations as the
number of their vertices tends to infinity.
We also prove that if $z$ is a root of multiplicities $m_1,\ldots,m_k$ for the Coxeter po\-ly\-no\-mials
of the trees $\cal{T}_1,\ldots,\cal{T}_k$, then $z$ is a root for the Coxeter po\-ly\-no\-mial
of their join, of multiplicity at least $\min\{m-m_1,\ldots,m-m_k\}$ where $m=m_1+\ldots+m_k$.
\end{abstract}

\subjclass[2010]{20F55}

\keywords{Coxeter polynomial; Coxeter transformation; spectral radius; Dynkin diagrams}
\maketitle

\section{Introduction and preliminaries}
In \cite{lakatos}, Lakatos determines the limit of the spectral radii of
the Co\-xe\-ter transformations of particular infinite sequences of starlike trees.
In the present paper we generalize the result of Lakatos \cite{lakatos} to a wider range of trees.
In addition, our idea of proof is different from the one in \cite{lakatos}.

We use the same terminology as in
\cite{lakatos,simson_algorithms} and \cite{simson_a_framework_coauthor}. We denote by
$\mathbb N\subseteq \mathbb Z$ the set of nonnegative integers
and the ring of integers, respectively. The algebra of the $n\times n$ square integer matrices
is denoted by $\mathbb M_n(\mathbb Z)$, where $n\in\mathbb N$.
We consider only simple graphs (i.e., graphs without multiple edges and loops),
$\Gamma=(\Gamma_0,\Gamma_1)$ with the set of vertices $\Gamma_0=\{v_1,\ldots,v_n\}$ and $\Gamma_1$ the set of edges,
where $(v_i,v_j)\in\Gamma_1$ if there is an edge connecting the vertices $v_i$ and $v_j$.

Assume that $\Gamma=(\Gamma_0,\Gamma_1)$ is a simple graph with the set of enumerated vertices $\Gamma_0=\left\{v_1,\ldots,v_n\right\}$.
We recall that the \textbf{adjacency matrix} of the graph $\Gamma$ is the $n\times n$ symmetric matrix
\begin{equation}
\label{Adjacency_matrix}
\ad_{\Gamma}=[a_{ij}]\in\mathbb{M}_n(\mathbb{Z})
\end{equation}
with $a_{ij}=1$, if $(v_i,v_j)\in \Gamma_1$ and $a_{ij}=0$, otherwise.
The \textbf{characteristic polynomial} of $\Gamma$ is defined to be the polynomial 
\begin{equation}
\label{characteristic_polynomial}
\chi_{\Gamma}(t):=\det(t\cdot I_n-\ad_{\Gamma})\in\mathbb{Z}[t]
\end{equation}
where $I_n=[\delta_{ij}]$ is the identity matrix in $\mathbb{M}_n(\mathbb{Z})$.
It is clear that $\chi_{\Gamma}(t)$ does not depend on the enumeration $v_1,\ldots,v_n$
of the vertices in $\Gamma_0$, see \cite{brualdi2011} and \cite{Cvetkovic2010}.

Let $\mathbb{R}^n$ be the standard $n$ dimensional real vector space with the standard basis $e_1,\ldots,e_n$.
Given $i\in\left\{1,\ldots,n\right\}$, the $i$th reflection of $\Gamma$ is defined to be the
$\mathbb{R}$-linear automorphism $\sigma_i:\mathbb{R}^n\rightarrow\mathbb{R}^n$ given by the formula 
\begin{equation}
\label{reflection}
\sigma_i(e_j)=e_j-\left(2\delta_{ij}-a_{ij}\right)e_i.
\end{equation}
The subgroup $W_{\Gamma}$ of the general linear group $GL(\mathbb{R}^n)\cong GL(n,\mathbb{R})$ generated by the reflections
$\sigma_1,\ldots,\sigma_n$ of $\Gamma$ is called the \textbf{Weyl group} of $\Gamma$ and has the presentation 
\begin{equation}
\label{presentation_of_weyl_group}
W_{\Gamma}=\langle\sigma_1,\sigma_2,\ldots,\sigma_n:(\sigma_i\sigma_j)^{m_{ij}}=1\rangle
\end{equation}
where $M=[m_{ij}]\in\mathbb{M}_n(\mathbb{Z})$ is the matrix defined by $m_{ii}=1$ for all $i=1,\ldots,n$
and $m_{ij}=a_{ij}+2$ for all $i\neq j$, see \cite{bourbakifourtosix,humphreyscox,steko}.
The product $\Phi_{\Gamma}=\sigma_1\cdot\ldots\cdot\sigma_n\in W_{\Gamma}$ is defined to be the \textbf{Coxeter transformation} of the graph $\Gamma$, see \cite{delapena1992}.
Obviously, it depends on the enumeration of the vertices $v_1,\ldots,v_n$ of $\Gamma$,
see \cref{Remark:uniqueness_of_coxeter_transformation} for details.
We recall that the Coxeter transformations were first studied by Coxeter in \cite{coxeter}
where he showed that their eigenvalues have remarkable properties,
see also Bourbaki \cite{bourbakifourtosix} and Humphreys \cite{humphreyscox}.

Throughout this paper, we assume that $\Gamma$ is a tree $\cal{T}=(\cal{T}_0,\cal{T}_1)$ with enumerated vertices $\cal{T}_0=\{v_1,\ldots,v_n\}$,
$\ad_{\cal{T}}=[a_{ij}]\in\mathbb{M}_n(\mathbb Z)$ is its adjacency matrix, and
\begin{equation}
\label{Equation:definition_of_coxeter_transformation}
\Phi_{\cal{T}}=\sigma_1\cdot\sigma_2\cdot\ldots\cdot\sigma_n\in W_{\cal{T}}
\end{equation}
is its Coxeter transformation, with respect to the enumeretion $v_1,v_2,\ldots,v_n$.
The Coxeter polynomial of the tree $\cal{T}$ is defined to be the characteristic polynomial of $\Phi_{\cal{T}}:\mathbb{R}^n\rightarrow\mathbb{R}^n$
that is, the polynomial (see \cite{humphreyscox,delapena1992,simson_a_coxeter-gram_classification})
\begin{equation}
\label{coxeter_polynomial}
\cox{\cal{T}}:=\det(t\cdot \operatorname{id}_{\mathbb{R}^n}-\Phi_{\cal{T}})\in\mathbb{Z}[t].
\end{equation}

Since $\cal{T}$ is a tree, the characteristic polynomial of the transformation $\Phi_{\cal{T}}$
does not depend on the enumeration of the vertices $v_1,\ldots,v_n$.
Indeed, if $v_{\epsilon(1)},\ldots,v_{\epsilon(n)}$ is obtained from $v_1,\ldots,v_n$ by a permutation
$\epsilon\in S_n$ then the Coxeter transformation $\Phi_{\cal{T}}^{\epsilon}:\mathbb{R}^n\rightarrow\mathbb{R}^n$
corresponding to the enumeration $v_{\epsilon(1)},\ldots,v_{\epsilon(n)}$ is conjucate with $\Phi_{\cal{T}}$,
see \cite[Proposition 2.2]{simson_a_coxeter-gram_classification}, \cite[Proposition 3.16]{humphreyscox},
\cite{bourbakifourtosix,delapena1992} and the following remark for details.

\begin{rem}
\label{Remark:uniqueness_of_coxeter_transformation}
(a) The Coxeter polynomial $\cox{\Delta}$ is also defined
and stu\-died in \cite{simson_algorithms,simson_a_coxeter-gram_classification} and \cite{simson_a_framework_single}
in a more general setting of loop-free edge-bipartite multigraphs $ \Delta = (\Delta_0,\Delta_1 = \Delta^{-}_1\cup\Delta^{+}_1)$,
with $\Delta_0=\left\{v_1,v_2,\ldots,v_n\right\}$ and a separated bipartition $\Delta_1 = \Delta^{-}_1\cup\Delta^{+}_1$ of the set of edges.
The class of loop-free edge-bipartite multigraphs contains all simple graphs,
loop-free multigraphs, and simple signed graphs, see \cite{zaslavsky_signed_graphs}.

The definition of $\cox{\Delta}\in\mathbb Z[t]$ for an edge-bipartite multigraph $\Delta$,
differs from the one given in (\ref{coxeter_polynomial})
for simple graphs, and depends on the upper triangular Gram matrix
$\check{G}_{\Delta}=[d_{ij}^{\triangle}]\in GL(n,\mathbb Z)$ where $d_{ij}^{\triangle}=1$ for $i=j, d_{ij}^{\triangle}$
is the number of edges between $v_i$ and $v_j$, with $i< j$, lying in $\Delta_1^+$ and $-d_{ij}^{\triangle}$
is the number of edges between $v_i$ and $v_j$, with $i<j$, lying in $\Delta_1^-$.

In \cite{simson_algorithms,simson_a_coxeter-gram_classification}
and \cite{simson_a_framework_single},
with any loop-free edge-bipartite multigraph $\Delta=(\Delta_0,\Delta_1=\Delta_1^-\cup\Delta_1^+)$
the Coxeter matrix $\operatorname{Cox}_{\Delta}:=-\check{G}_\Delta\cdot\check{G}_\Delta^{-\operatorname{tr}}\in\mathbb{M}_n(\mathbb{Z})$
is associated and its characteristic polynomial
\begin{equation}
\label{coxeter_polynomial_of_edge-bipartite}
\cox{\Delta}:=\det(t\cdot I_n-\operatorname{Cox}_{\Delta})\in\mathbb{Z}[t],
\end{equation}
called the \textbf{Coxeter polynomial} of $\Delta$ is self-reciprocal in the sense that  $\cox{\Delta}=t^n\operatorname{cox}_{\Delta}\left(\frac{1}{t}\right)$,
see Lemma 2.8 (c3)-(c4) in \cite{simson_mesh_geometries}.
The \textbf{Coxeter tra\-nsfo\-rma\-tion} of $\Delta$ is defined to be the group automorphism
\begin{equation}
\label{coxeter_tranformation}
\Phi_{\Delta}:\mathbb{Z}^n\rightarrow\mathbb{Z}^n,v\mapsto v\cdot\operatorname{Cox}_{\Delta}.
\end{equation}
It is proved in \cite[Proposition 2.2]{simson_a_coxeter-gram_classification} that in the case when the underlying multigraph
$\overline{\Delta}$ of $\Delta$ is a tree, the Coxeter polynomial does not depend on the enumeration of the vertices $v_1,\ldots,v_n$.
Hence, in view of the sink-source reflection technique applied in
\cite[Proposition VII.4.7]{simson_elements}, the Coxeter polynomial
$\cox{\Delta}$ (\ref{coxeter_polynomial_of_edge-bipartite}) of $\Delta$ coincides with the Coxeter polynomial
$\cox{\overline{\Delta}}$ of the tree $\cal{T}=\overline{\Delta}$ (in the sense of (\ref{coxeter_polynomial})).

The reader is also referred to the recent papers
\cite{simson_and_kasjan_mesh_algorithms_first,simson_and_kasjan_mesh_algorithms_second},
where the irreducible and reduced root systems in the sense of Bourbaki \cite{bourbakifourtosix}
are studied in connection with roots of positive connected edge-bipartite graphs.

(b) The Coxeter polynomial is also defined in \cite{simson_integral_bilinear_forms} and \cite{simson_a_framework_coauthor},
for any finite poset $J\equiv(J,\preceq)$, with $J=\{1,\ldots,n\}$, as
\begin{equation}
\label{coxeter_polynomial_of_poset}
\cox{J}:=\det(t\cdot I_n-\operatorname{Cox}_{J})\in\mathbb{Z}[t]
\end{equation}
where $\operatorname{Cox}_{J}=-C_J\cdot C_J^{-\operatorname{tr}}\in\mathbb{M}_n(\mathbb{Z})$
is the Coxeter matrix of $J$ and $C_J:=[c_{ij}]\in\mathbb{M}(\mathbb{Z})$ is its incidence matrix,
with $c_{ij}=1$, for $i\preceq j$, and $c_{ij}=0$ if $i\not \preceq j$.
It is shown that if the Hasse diagram $H:=\cal{H}_J$ of $J$ is a tree,
then the Coxeter polynomial $\cox{J}$ (\ref{coxeter_polynomial_of_poset}) of $J$
coincides with the Coxeter polynomial $\cox{H}$ of the tree $\cal{T}=H$ (in the sense of (\ref{coxeter_polynomial})).
\end{rem}

By applying \cref{Remark:uniqueness_of_coxeter_transformation}(a) we get the following useful fact

\begin{cor}
\label{Corollary}
Assume that $\cal{T}=(\cal{T}_0,\cal{T}_1)$ is a tree with enumerated vertices $v_1,\ldots,v_n$
and let $\check{G}_{\cal{T}}=[d_{ij}]\in\mathbb{M}_n(\mathbb{Z})$ be the upper triangular Gram matrix
of $\cal{T}$, with $d_{11}=\ldots=d_{nn}=1, d_{ij}=-1$ if $i<j$ and there is an edge $(v_i,v_j)$ in $\cal{T}_1$
and $[d_{ij}]=0$, otherwise.

(a) The Coxeter transformation $\Phi_{\cal{T}}:\mathbb{R}^n\rightarrow\mathbb{R}^n$
(\ref{Equation:definition_of_coxeter_transformation}) of the tree $\cal{T}$
restricts to the group automorphism $\Phi_{\cal{T}}:\mathbb{Z}^n\rightarrow\mathbb{Z}^n$ defined by the formula
$$
\Phi_{\cal{T}}(u)=u\cdot \operatorname{Cox}_{\cal{T}}
$$
where $\operatorname{Cox}_{\cal{T}}:=-\check{G}_{\cal{T}}\cdot\check{G}_{\cal{T}}^{-\operatorname{tr}}\in\mathbb{M}_n(\mathbb{Z})$
is the Coxeter matrix of $\cal{T}$ viewed as an edge-bipartite graph, with  $\cal{T}_1^+$ empty.

(b) The Coxeter polynomial $\cox{\cal{T}}$ (\ref{coxeter_polynomial}) of the tree $\cal{T}$
coincides with the Coxeter polynoial $\cox{\mathcal{T}}=\det(t\cdot{I_n}-\operatorname{Cox}_\Delta)$
(\ref{coxeter_polynomial_of_edge-bipartite}) of $\cal{T}$ viewed as an edge-bipartite tree.
 
(c) The Coxeter polynomial $\cox{\cal{T}}$ (\ref{coxeter_polynomial})
of the tree $\cal{T}$ is self-reciprocal and does not depend on the enumeration of the vertices $v_1,\ldots,v_n$ of the tree $\cal{T}$.
\end{cor}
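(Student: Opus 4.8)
The plan is to read off all three parts from \cref{Remark:uniqueness_of_coxeter_transformation}(a) by viewing $\cal{T}$ as the loop-free edge-bipartite tree $\Delta$ whose underlying multigraph $\overline{\Delta}$ is $\cal{T}$, with $\Delta_1^-=\cal{T}_1$ and $\Delta_1^+=\emptyset$. First I would check that the upper triangular Gram matrix $\check{G}_\Delta=[d_{ij}^{\triangle}]$ of this $\Delta$ is exactly $\check{G}_{\cal{T}}$: the diagonal entries are $1$ by definition, and for $i<j$ the entry $d_{ij}^{\triangle}$ equals $-(\text{the number of edges between }v_i,v_j\text{ lying in }\Delta_1^-)=-a_{ij}$, while the $\Delta_1^+$-contribution vanishes because $\Delta_1^+$ is empty. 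Hence $\operatorname{Cox}_{\cal{T}}=-\check{G}_{\cal{T}}\check{G}_{\cal{T}}^{-\operatorname{tr}}$ coincides with the Coxeter matrix $\operatorname{Cox}_\Delta$ of (\ref{coxeter_polynomial_of_edge-bipartite}).

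For part (a) I would show that the reflection-theoretic Coxeter transformation $\Phi_{\cal{T}}=\sigma_1\cdots\sigma_n$ agrees, as a $\mathbb{Z}$-linear automorphism of $\mathbb{Z}^n$, with the map $u\mapsto u\cdot\operatorname{Cox}_\Delta$. Writing $G=2I_n-\ad_{\cal{T}}=\check{G}_{\cal{T}}+\check{G}_{\cal{T}}^{\operatorname{tr}}$ for the symmetric Gram matrix, formula (\ref{reflection}) shows that $\sigma_i$ is carried by the integer matrix $I_n-E_{ii}G$ acting on column vectors, where $E_{ii}$ is the matrix unit; in particular each $\sigma_i$ preserves the lattice $\mathbb{Z}^n$, so $\Phi_{\cal{T}}$ restricts to $\mathbb{Z}^n$. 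Splitting $G=(I_n+U)+(I_n+L)$ into its strictly upper and lower triangular parts $U,L$ (so that $\check{G}_{\cal{T}}=I_n+U$ and $U=L^{\operatorname{tr}}$), the claim reduces to the triangular identity $(I_n+U)\,\sigma_1\cdots\sigma_n=-(I_n+L)$, equivalently $\sigma_1\cdots\sigma_n=-\check{G}_{\cal{T}}^{-1}\check{G}_{\cal{T}}^{\operatorname{tr}}=\operatorname{Cox}_\Delta^{\operatorname{tr}}$; this is precisely what the sink-source reflection technique of \cite[Proposition VII.4.7]{simson_elements} invoked in \cref{Remark:uniqueness_of_coxeter_transformation}(a) delivers, and I would either cite it or verify it by telescoping the product of the matrices $I_n-E_{ii}G$.

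Parts (b) and (c) then follow. For (b), the two polynomials are characteristic polynomials of one and the same operator expressed in the two dual conventions, so they are equal because a matrix and its transpose share a characteristic polynomial; thus $\cox{\cal{T}}$ of (\ref{coxeter_polynomial}) equals $\det(t\cdot I_n-\operatorname{Cox}_\Delta)$ of (\ref{coxeter_polynomial_of_edge-bipartite}). For (c), self-reciprocity is imported directly from the self-reciprocity of $\cox{\Delta}$ recorded in \cref{Remark:uniqueness_of_coxeter_transformation}(a) (Lemma 2.8 of \cite{simson_mesh_geometries}), and independence of the enumeration of $v_1,\dots,v_n$ is the content of \cite[Proposition 2.2]{simson_a_coxeter-gram_classification}, applicable because $\overline{\Delta}=\cal{T}$ is a tree.

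The hard part, and the only place requiring genuine care, is the convention bookkeeping in part (a): function composition $\sigma_1\circ\cdots\circ\sigma_n$ on column vectors corresponds to the reversed product of the transposed matrices acting on row vectors, so one must track both a transpose and an order reversal in order to land on $u\mapsto u\cdot\operatorname{Cox}_\Delta$ rather than on its inverse or transpose. Once the identity $\sigma_1\cdots\sigma_n=\operatorname{Cox}_\Delta^{\operatorname{tr}}$ is pinned down, every remaining step is either routine linear algebra or a direct citation of the results assembled in \cref{Remark:uniqueness_of_coxeter_transformation}(a).
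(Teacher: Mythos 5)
Your proposal is correct and follows essentially the same route as the paper: view $\cal{T}$ as an edge-bipartite graph with $\cal{T}_1^+=\emptyset$, observe that $\check{G}_{\cal{T}}$ coincides with the upper triangular Gram matrix $\check{G}_\Delta$ of \cref{Remark:uniqueness_of_coxeter_transformation}(a), and then import all three assertions from the results collected there. The paper's own proof is exactly this two-line reduction; your additional verification of the triangular identity $\check{G}_{\cal{T}}\,\sigma_1\cdots\sigma_n=-\check{G}_{\cal{T}}^{\operatorname{tr}}$ is a correct (and welcome) expansion of the citation to \cite[Proposition VII.4.7]{simson_elements}, not a different argument.
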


\begin{proof}

We view $\cal{T}$ as an edge-bipartite graph, with $\cal{T}_1=\cal{T}_1^-\cup\cal{T}_1^+$
where $\cal{T}_1^+$ is the empty set. Then the matrix $\check{G}=[d_{ij}]\in\mathbb{M}_n(\mathbb Z)$ coincides with the upper triangular Gram matrix
$\check{G}_{\Delta}=[a_{ij}^{\triangle}]$ defined in \cref{Remark:uniqueness_of_coxeter_transformation}(a).
Then the corollary is a consequence of \cref{Remark:uniqueness_of_coxeter_transformation}(a).
\end{proof}

The most important families of trees are the trees of type $ADE$
given in \Cref{Fig:ADE_graphs}.
These trees are known as the simply laced Dynkin diagrams.
There is a long list of objects which admit an $ADE$ classification, meaning that there is an equivalence
between equivalence classes of objects of the given type and the $ADE$ graphs (see for example \cite{hazewinkel}). Examples of these objects
include the
\begin{enumerate}[-]
\item simply laced finite Coxeter groups, 
\item simply laced simple Lie algebras,
\item platonic solids,
\item quivers of finite representation types,
\item Kleinian singularities,
\item finite subgroups of the $\operatorname{SU}(2)$ group.
\end{enumerate}

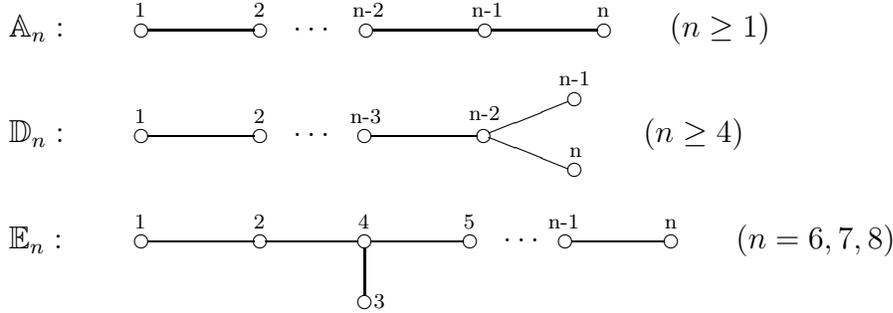
\begin{figure}[ht]
\hspace{4em}
\begin{picture}(355,120)

$
	\put(52.5,100){\line(1,0){40}}
	\put(137.5,100){\line(1,0){40}}
	\put(182.5,100){\line(1,0){40}}
	\put(50,100){\circle{5}}
	\put(47.5,105){{\tiny 1}}
	\put(95,100){\circle{5}}
	\put(92.5,105){{\tiny 2}}
	\put(135,100){\circle{5}}
	\put(130,105){{\tiny n-2}}
	\put(180,100){\circle{5}}
	\put(175,105){{\tiny n-1}}	
	\put(225,100){\circle{5}}
	\put(222.5,105){{\tiny n}}
	\put(107.5,100){$\ldots$}
	\put(0,97.5){$\mathbb A_{n}:$}
	\put(250,97.5){$(n\geq 1)$}	
	\put(52.5,60){\line(1,0){40}}
	\put(137,60){\line(1,0){40}}
	\put(181.5,61){\line(5,2){30}}
	\put(181.5,59.6){\line(5,-2){30}}
	\put(50,60){\circle{5}}
	\put(47.5,65){{\tiny 1}}
	\put(95,60){\circle{5}}
	\put(92.5,65){{\tiny 2}}	
	\put(134.5,60){\circle{5}}
	\put(129,65){{\tiny n-3}}
	\put(179.5,60){\circle{5}}
	\put(174,66){{\tiny n-2}}	
	\put(214,74){\circle{5}}
	\put(208,79){{\tiny n-1}}
	\put(214,47){\circle{5}}
	\put(212,52){{\tiny n}}
	\put(107.5,60){$\ldots$}
	\put(0,57.5){$\mathbb D_{n}:$}
	\put(240,57.5){$(n\geq 4)$}
	\put(52.5,20){\line(1,0){40}}
	\put(97,20){\line(1,0){35}}
	\put(137,20){\line(1,0){35}}
	\put(187,20){\ldots}
	\put(213,20){\line(1,0){35}}
	\put(134.5,17.5){\line(0,-1){18}}
	\put(50,20){\circle{5}}
	\put(47.5,25){{\tiny 1}}
	\put(95,20){\circle{5}}
	\put(92.5,25){{\tiny 2}}
	\put(134.5,20){\circle{5}}
	\put(132,25){{\tiny 4}}
	\put(174.5,20){\circle{5}}
	\put(172,25){{\tiny 5}}
	\put(210.5,20){\circle{5}}
	\put(204,25){{\tiny n-1}}
	\put(250.5,20){\circle{5}}
	\put(248,25){{\tiny n}}
	\put(134.5,-3){\circle{5}}
	\put(138,-5){{\tiny 3}}
	\put(0,17.5){$\mathbb E_{n}:$}
	\put(275,17.5){$(n=6, 7, 8)$}
$
\end{picture}
\caption{Simply laced Dynkin diagrams} 
\label{Fig:ADE_graphs} 
\end{figure}

\noindent
Note that the graphs $\mathbb{E}_n$ are defined in general for all $n\geq 3$,
where $\mathbb{E}_3=\mathbb{A}_2\oplus\mathbb{A}_1$ and for $n\geq 4$ are defined as shown in \cref{Fig:ADE_graphs}.
The graphs $\mathbb{E}_n$ where studied extensively in \cite{gross} where their Coxeter polynomials
were completely factored into cyclotomic and Salem polynomials. The Coxeter polynomials of the $ADE$ graphs are well known
and have been calculated many times (see for instance
\cite{moody,bourbakifourtosix,damianou2014,gross,simson_a_coxeter-gram_classification,simson_a_framework_coauthor,steko}).
One of the main aims of this paper is to find a universal formula for the Coxeter polynomials of a family of
trees which we denote by $\Salem{i}{p_1}{\ldots}{p_k}$. For specific values of $i, k, p_1, \ldots, p_k \in \mathbb N$
we obtain the $ADE$ graphs.

To define the trees $\Salem{i}{p_1}{\ldots}{p_k}$, we recall that
the join of the simple graphs $\Gamma_1,\Gamma_2,\ldots,\Gamma_k$, with a fixed vertex $v_i$
in each of the graphs $\Gamma_i$, is the graph obtained by adding
a new vertex and joining that to $v_i$ for all $i=1,2,\ldots,k$ (see \cite{steko}).

For $k\in\mathbb N, p_1,\ldots,p_k\in\mathbb N$ and $i\in\{0,1,2,\ldots,k\}$,
we define the tree $\Salem{i}{p_1}{\ldots}{p_k}$
to be the join of the Dynkin diagrams
$\mathbb D_{p_1}, \ldots,\mathbb D_{p_i}$ and $\mathbb A_{p_{i+1}}, \ldots$,
$\mathbb A_{p_k}$, 
on their vertices numbered $1$, as shown in \Cref{Fig:T_{p,q,r}}.

The trees $\Salem{0}{p_1}{\ldots}{p_k}$ are the stars $\mathbb T_{p_1-1,\ldots,p_k-1}$ defined in
\cite{ringel_tame_algebras}, which are the join of the Dynkin diagrams $\mathbb A_{p_1-1},\ldots,\mathbb A_{p_k-1}$.
These are also the wild stars defined in \cite{lakatos}.

To the best of my knowledge the graphs $\Salem{i}{p_1}{\ldots}{p_k}$
for $i\geq1$ are defined here for the first time.
For particular values of $i$ and $p_j$, we get some well-known trees.
For example, for $k=2, i=0, p_1=1, p_2=n-2$ we obtain the Dynkin diagrams $\mathbb A_n$, 
for $k=3, i=0, p_1=1, p_2=1, p_3=n-3$ we obtain the Dynkin diagrams $\mathbb D_n$,
for $k=3, i=0, p_1=1, p_2=2, p_3=n-4$ we obtain the diagrams $\mathbb E_n$
and for $k=3, i=1, p_1=n-2, p_2=p_3=1$  we obtain the Euclidean Dynkin diagrams
$\widetilde{\mathbb D}_n$ (see \Cref{Fig:Euclidean_graphs}).
Note that $\Salem{0}{1}{2}{6}=\mathbb{E}_{10}$ and the Coxeter polynomial
$\cox{\mathbb E_{10}}=t^{10}+t^9-t^7-t^6-t^5-t^4-t^3+t+1$ is the well known Lehmer's polynomial which
is conjectured to have the smallest Mahler measure among the monic
integer non-cyclotomic polynomials (see \cite{smythsurveyonmahlermeasure}).

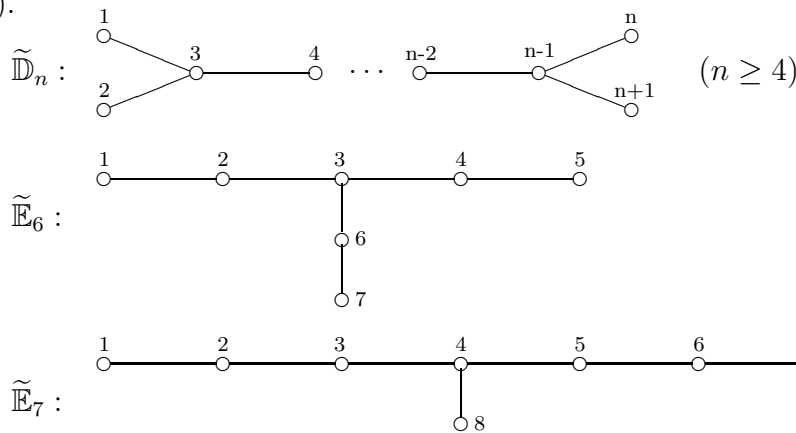
\begin{figure}[ht]
\hspace{4em}
\begin{picture}(355,140)
$
	\put(82.5,130){\line(1,0){40}}
	\put(167,130){\line(1,0){40}}
	\put(212,131){\line(5,2){30}}
	\put(212,129){\line(5,-2){30}}
	\put(77.5,131){\line(-5,2){30}}
	\put(77.5,129){\line(-5,-2){30}}
	\put(80,130){\circle{5}}
	\put(77.5,135){{\tiny 3}}
	\put(125,130){\circle{5}}
	\put(122.5,135){{\tiny 4}}	
	\put(164.5,130){\circle{5}}
	\put(159,135){{\tiny n-2}}
	\put(209.5,130){\circle{5}}
	\put(204,136){{\tiny n-1}}
	\put(45,144){\circle{5}}
	\put(43,149){{\tiny 1}}
	\put(45,116){\circle{5}}
	\put(43,121){{\tiny 2}}
	\put(244.5,144){\circle{5}}
	\put(242,149){{\tiny n}}
	\put(244.5,116){\circle{5}}
	\put(238,121){{\tiny n+1}}
	\put(137.5,130){$\ldots$}
	\put(10,127.5){$\widetilde{\mathbb D}_{n}:$}
	\put(270,127.5){$(n\geq 4)$}
	\put(92.5,90){\line(1,0){40}}
	\put(47.5,90){\line(1,0){40}}
	\put(137.5,90){\line(1,0){40}}
	\put(182.5,90){\line(1,0){40}}
	\put(135,88){\line(0,-1){18}}
	\put(135,65){\line(0,-1){18}}
	\put(45,90){\circle{5}}
	\put(43,95){{\tiny 1}}
	\put(90,90){\circle{5}}
	\put(88,95){{\tiny 2}}
	\put(135,90){\circle{5}}
	\put(132,95){{\tiny 3}}
	\put(180,90){\circle{5}}
	\put(178,95){{\tiny 4}}
	\put(225,90){\circle{5}}
	\put(223,95){{\tiny 5}}
	\put(135,67){\circle{5}}
	\put(140,65){{\tiny 6}}
	\put(135,44){\circle{5}}
	\put(140,42){{\tiny 7}}
	\put(10,72){$\widetilde{\mathbb E}_{6}:$}
	\put(92.5,20){\line(1,0){40}}
	\put(47.5,20){\line(1,0){40}}
	\put(137.5,20){\line(1,0){40}}
	\put(182.5,20){\line(1,0){40}}
	\put(227.5,20){\line(1,0){40}}
	\put(272.5,20){\line(1,0){40}}
	\put(180,18){\line(0,-1){18}}
	\put(45,20){\circle{5}}
	\put(43,25){{\tiny 1}}
	\put(90,20){\circle{5}}
	\put(88,25){{\tiny 2}}
	\put(135,20){\circle{5}}
	\put(132,25){{\tiny 3}}
	\put(180,20){\circle{5}}
	\put(178,25){{\tiny 4}}
	\put(225,20){\circle{5}}
	\put(223,25){{\tiny 5}}
	\put(270,20){\circle{5}}
	\put(268,25){{\tiny 6}}
	\put(315,20){\circle{5}}
	\put(313,25){{\tiny 7}}
	\put(180,-3){\circle{5}}
	\put(185,-5){{\tiny 8}}
	\put(10,2){$\widetilde{\mathbb E}_{7}:$}
$
\end{picture}
\caption{The Euclidean diagrams $\widetilde{\mathbb D}_{n}, \widetilde{\mathbb E}_{6}$ and $\widetilde{\mathbb E}_{7}$} 
\label{Fig:Euclidean_graphs} 
\end{figure}

Let $p(t)$ be a monic polynomial with integer coefficients. We denote the set of its roots
$\{z\in\mathbb C:p(z)=0\}$ by $Z(p(t))$ and  
the maximum value of the set $\{|z|:z\in Z(p)\}$ by $\rho(p(t))$.
For example, for the polynomials $\cox{\mathbb A_n},\cox{\mathbb D_n}$
we have $\rho(\coxwo{\mathbb A_n})=\rho(\coxwo{\mathbb D_n})=1$
while for the polynomials $\cox{\mathbb E_n}$ for $n\geq 10$
we have $\rho(\coxwo{\mathbb E_n})>1$ (see \cite{gross} and \cite{Mckee}).

Assuming that the polynomial $p(t)$ is irreducible then,
if all of its roots lie on the unit circle (or equivalently $\rho(p(t))=1$), it is called a \textit{cyclotomic polynomial}.

Assuming now that the polynomial $p(t)$ is irreducible, non-cyclotomic with only one root outside the unit circle then,
if it has at least one root on the unit circle it is called a \textit{Salem polynomial}
while if it has no roots on the unit circle it is called a \textit{Pisot polynomial} (see \cite{Mckee}).

It is not hard to see that cyclotomic and Salem polynomials are self-reciprocal.
This follows from the following facts.
The polynomial $p(t)$ of degree $n$ is irreducible if and only if the polynomial
$p^*(t):=t^np(\frac{1}{t})$, which we call the reciprocal of $p(t)$, is irreducible.
If $\alpha$ lies on the unit circle then $\alpha$ is a root of $p(t)$
if and only if $\frac{1}{\alpha}$ is also a root of $p(t)$.

We recall from \cite{Mckee} the following definition.

\begin{defin}
(a) A tree $\cal{T}$ is said to be \textit{cyclotomic} if all roots of
the Coxeter polynomial $\cox{\cal{T}}$ are on the unit disk or equivalently
$\cox{\cal{T}}$ is a product of cyclotomic polynomials.

\noindent
(b) A tree $\cal{T}$ is called a \textit{Salem tree} if the Coxeter polynomial
$\cox{\cal{T}}$ has only one root outside the unit circle or equivalently
$\cox{\cal{T}}$ is a product of one of the Salem polynomials and some cyclotomic polynomials.
\end{defin}

\section{Main results}
\label{Section:Main_results}
In this paper we are mainly concerned with the case $k=3$
(i.e. with the trees $\Salem{i}{p}{q}{r}$) and prove 
four theorems about the Coxeter polynomials $\cox{S^{(i)}_{p_1,\ldots,p_k}}$. 
In \cref{Theorem:Recursive relation for Coxeter polynomials} we present a recursive relation for the
Coxeter polynomials of the trees $\Salem{i}{p_1}{\ldots}{p_k}$  and we use it in
\cref{Theorem:Coxeter polynomials of the graphs} to
find the Coxeter polynomials of the trees $S^{(i)}_{p,q,r}$ for all $i=0,1,2,3$.
In \cref{Theorem:Limit theorem for spectrum radius} we 
show that the limits 
$\lim_{p\to\infty}\rho\left(\coxwo{\Salem{i}{p}{q}{r}}\right)$,
$\lim_{q\to\infty}\rho\left(\coxwo{\Salem{i}{p}{q}{r}}\right)$ and
$\lim_{r\to\infty}\rho\left(\coxwo{\Salem{i}{p}{q}{r}}\right)$
are Pisot numbers. We also show that
$$
\lim_{p,q,r\to\infty}\rho\left(\coxwo{\Salem{i}{p}{q}{r}}\right)=2, \text{ for all } i=0,1,2,3.
$$
It was shown by Lakatos \cite{lakatos} that
$$\lim_{p_1,\ldots,p_k\to\infty}\rho\left(\coxwo{\Salem{0}{p_1}{\ldots}{p_k}}\right)=k-1, \text{ for } k\in\mathbb N.
$$
In \cref{Theorem:General limit theorem for spectrum radius} we generalize that result by
showing that
$$
\lim_{p_1,\ldots,p_k\to\infty}\rho\left(\coxwo{\Salem{i}{p_1}{\ldots}{p_k}}\right)=k-1,
\text{ for all } i\in\{0,1,\ldots,k\}.
$$
We mention here that the multiple limits $\lim_{p_1,\ldots,p_i\to\infty}\alpha_n$
are the iterated limits $\lim_{p_1\to\infty}(\ldots(\lim_{p_i\to\infty}\alpha_n))$.

\begin{figure}[ht]
\hspace{4em}
\begin{picture}(315,150)
\put(2,138){{\tiny$v$}}
\put(10,132){\circle{5}}
\put(12.5,132){\line(1,0){40}}
\put(50,139){{\tiny$v_{1,1}$}}
\put(55,132){\circle{5}}
\put(65,131){$\ldots$}
\put(80,139){{\tiny$v_{1,p_1-3}$}}
\put(90,132){\circle{5}}
\put(92.5,132){\line(1,0){40}}
\put(125,139){{\tiny$v_{1,p_1-2}$}}
\put(135,132){\circle{5}}
\put(135,132){\circle{41}}
\put(155,139){{\tiny$H_1$}}
\put(12.25,130.875){\line(2,-1){36}}
\put(49,117){{\tiny$v_{2,1}$}}
\put(50.5,111.75){\circle{5}}
\put(63,102){$\rdots{-26.6}$}
\put(76.5,102.875){{\tiny $v_{2,p_2-3}$}}
\put(78,97.625){\circle{5}}
\put(80.25,96.5){\line(2,-1){36}}
\put(110,82.625){{\tiny $v_{2,p_2-2}$}}
\put(118.5,77.375){\circle{5}}
\put(118.5,77.375){\circle{41}}
\put(140,80){{\tiny $H_2$}}
\put(85,40){$\rdots{45}$}
\put(10,129.5){\line(0,-1){40}}
\put(13,87){{\tiny$v_{k,1}$}}
\put(10,87){\circle{5}}
\put(9,67){$\vdots$}
\put(13,57){{\tiny$v_{k,p_k-3}$}}
\put(10,57){\circle{5}}
\put(10,54.5){\line(0,-1){40}}
\put(0,05){{\tiny$v_{k,p_k-2}$}}
\put(10,12){\circle{5}}
\put(10,12){\circle{41}}
\put(30,0){{\tiny$H_k$}}

\put(210,132){\circle{5}}
\put(200,125){{\tiny$v_{j,p_j-1}$}}
\put(212.5,132){\line(1,0){30}}
\put(245,132){\circle{5}}
\put(235,125){{\tiny$v_{j,p_j-2}$}}
\put(247.5,132){\line(1,0){30}}
\put(280,132){\circle{5}}
\put(275,125){{\tiny$v_{j,p_j}$}}
\put(210,110){{\tiny$H_j$ for $j=1,2,\ldots,i.$}}

\put(210,72){\circle{5}}
\put(200,65){{\tiny$v_{j,p_j-2}$}}
\put(212.5,72){\line(1,0){30}}
\put(245,72){\circle{5}}
\put(235,65){{\tiny$v_{j,p_j-1}$}}
\put(247.5,72){\line(1,0){30}}
\put(280,72){\circle{5}}
\put(275,65){{\tiny$v_{j,p_j}$}}
\put(200,50){{\tiny$H_j$ for $j=i+1,i+2,\ldots,k.$}}
\end{picture}
\caption{The trees $\Salem{i}{p_1}{\ldots}{p_k}$} 
\label{Fig:T_{p,q,r}} 
\end{figure}

\begin{thm}
\label{Theorem:Recursive relation for Coxeter polynomials}
Let $k,p_1,\ldots,p_k \in \mathbb N$ and $p_1\geq2$. Then
$$
\cox{\Salem{0}{p_1}{\ldots}{p_k}}=
(t+1)\cox{\Salem{0}{p_1-1}{\ldots}{p_k}}-t\cox{\Salem{0}{p_1-2}{\ldots}{p_k}}.
$$
If $k\geq2$ and $p_1\geq3$ then
$$
\cox{\Salem{i}{p_1}{\ldots}{p_k}}=
(t+1)\left[\cox{\Salem{i-1}{p_2}{\ldots}{p_k,p_1-1}}-t\cox{\Salem{i-1}{p_2}{\ldots}{p_k,p_1-3}}\right],
$$
for all $i\in\{1,\ldots,k\}$
\end{thm}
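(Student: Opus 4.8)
The plan is to derive both identities from one leaf-deletion recursion for the Coxeter polynomials of arbitrary trees, combined with the multiplicativity of $\cox{\cdot}$ over disjoint unions. First I would record a convenient determinantal form of the Coxeter polynomial. By \cref{Corollary}(a) we have $\operatorname{Cox}_{\cal{T}}=-\check{G}_{\cal{T}}\check{G}_{\cal{T}}^{-\operatorname{tr}}$, and since $\check{G}_{\cal{T}}$ is unitriangular ($\det\check{G}_{\cal{T}}=1$),
$$
\cox{\cal{T}}=\det(tI_n-\operatorname{Cox}_{\cal{T}})=\det\bigl(tI_n+\check{G}_{\cal{T}}\check{G}_{\cal{T}}^{-\operatorname{tr}}\bigr)=\det\bigl(t\check{G}_{\cal{T}}^{\operatorname{tr}}+\check{G}_{\cal{T}}\bigr),
$$
the last step using $tI_n+\check{G}_{\cal{T}}\check{G}_{\cal{T}}^{-\operatorname{tr}}=(t\check{G}_{\cal{T}}^{\operatorname{tr}}+\check{G}_{\cal{T}})\check{G}_{\cal{T}}^{-\operatorname{tr}}$. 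Writing $M_{\cal{T}}:=t\check{G}_{\cal{T}}^{\operatorname{tr}}+\check{G}_{\cal{T}}$, this matrix carries $t+1$ on the diagonal and, for each edge $(v_a,v_b)$ with $a<b$, the entry $-1$ in position $(a,b)$ and $-t$ in position $(b,a)$, all other entries being zero. Because $\cox{\cal{T}}$ is independent of the enumeration by \cref{Corollary}(c), I am free to order the vertices as convenient for each computation.

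I would then isolate two facts. If $\cal{T}=\cal{T}'\sqcup\cal{T}''$ is a disjoint union, ordering the vertices of $\cal{T}'$ first makes $M_{\cal{T}}$ block diagonal, so $\cox{\cal{T}}=\cox{\cal{T}'}\cdot\cox{\cal{T}''}$; in particular $\cox{\mathbb{A}_1}=t+1$. The key lemma is the leaf-deletion recursion: if $v$ is a leaf of $\cal{T}$ with unique neighbour $w$, and $\cal{T}'=\cal{T}-v$, $\cal{T}''=\cal{T}-v-w$, then
$$
\cox{\cal{T}}=(t+1)\cox{\cal{T}'}-t\,\cox{\cal{T}''}.
$$
To prove it I would enumerate the vertices of $\cal{T}''$ first, then $w$, then $v$ last, so that the last row of $M_{\cal{T}}$ equals $(0,\ldots,0,-t,\,t+1)$, its only off-diagonal entry coming from the edge $vw$. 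Expanding $\det M_{\cal{T}}$ along this row, the cofactor of the diagonal entry $t+1$ is exactly $\det M_{\cal{T}'}=\cox{\cal{T}'}$, since deleting the last row and column leaves the $M$-matrix of $\cal{T}-v$; the cofactor of the entry $-t$ reduces, after one further expansion along the column indexed by $v$ (which has a single nonzero entry, again the one from $vw$), to $\cox{\cal{T}''}$. Multiplying these two cofactors by the respective last-row entries $t+1$ and $-t$ and summing yields the displayed recursion.

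Both identities are then immediate applications. In $\Salem{0}{p_1}{\ldots}{p_k}$ the first arm is a path; taking $v$ to be its terminal leaf and $w$ its neighbour gives $\cal{T}'=\Salem{0}{p_1-1}{\ldots}{p_k}$ and $\cal{T}''=\Salem{0}{p_1-2}{\ldots}{p_k}$, so the recursion is exactly the first formula. In $\Salem{i}{p_1}{\ldots}{p_k}$ with $i\geq1$ the first arm is the $\mathbb{D}_{p_1}$ fork; I take $v$ to be one of the two fork leaves and $w$ the fork vertex. Removing $v$ straightens that fork into a path, so $\cal{T}'=\Salem{i-1}{p_2}{\ldots}{p_k,p_1-1}$; removing $w$ as well detaches the other fork leaf as an isolated vertex, so $\cal{T}''=\Salem{i-1}{p_2}{\ldots}{p_k,p_1-3}\sqcup\mathbb{A}_1$. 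By multiplicativity $\cox{\cal{T}''}=(t+1)\cox{\Salem{i-1}{p_2}{\ldots}{p_k,p_1-3}}$, and substituting into the recursion and factoring out $(t+1)$ gives the second formula.

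The routine but sign-sensitive core is the cofactor computation behind the leaf-deletion recursion. The one genuinely structural point to get right is that in the $\mathbb{D}$-case the second deletion disconnects a vertex, producing the extra factor $t+1$ that is afterwards pulled out of the bracket; one must also track the relabelling whereby a deleted $\mathbb{D}$-arm reappears as an $\mathbb{A}$-arm at the end of the list and the superscript drops from $i$ to $i-1$, so that the outcome matches the notation $\Salem{i}{p_1}{\ldots}{p_k}$ verbatim.
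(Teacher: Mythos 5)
Your argument is correct, and at the level of the tree decomposition it is exactly the paper's: in both cases one deletes the pendant edge at the tip of the first arm (the edge $(v_{1,p_1-1},v_{1,p_1})$ when $i=0$, and the edge from the fork vertex $v_{1,p_1-2}$ to the fork leaf $v_{1,p_1}$ when $i\geq1$), and in both cases the extra factor $t+1$ in the bracketed formula comes from the isolated vertex that the second deletion detaches from the $\mathbb D$-fork. The only difference is how the recursion itself is justified. The paper invokes the general Subbotin--Sumin splitting-edge formula (\cref{Proposition:Cut theorem}), proved via a block factorization of the product of reflections, and specializes it to the case where one side of the split is $\mathbb A_1$; you instead prove only the leaf-deletion special case you need, directly from the identity $\cox{\cal{T}}=\det\bigl(t\check{G}_{\cal{T}}^{\operatorname{tr}}+\check{G}_{\cal{T}}\bigr)$ and a two-step Laplace expansion. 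Your route is self-contained and arguably more elementary (the cofactor signs you flag do work out), at the cost of not yielding the full splitting formula, which the paper reuses later for \cref{Theorem:Coxeter polynomials of the graphs}, \cref{Theorem:General limit theorem for spectrum radius} and \cref{Theorem:Multiplicity theorem}; for the present theorem alone, either justification of the recursion suffices.
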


\begin{thm}
\label{Theorem:Coxeter polynomials of the graphs}
(a)For $i\leq2$, the Coxeter polynomial $\cox{S^{(i)}_{p,q,r}}$
of the tree $S^{(i)}_{p,q,r}$, is given by the formula 
$$
\cox{\Salem{i}{p}{q}{r}}=\frac{(t+1)^i}{t-1}\left[t^{r+2}F^{(i)}_{p,q}(t)-\left(F^{(i)}_{p,q}\right)^*(t)\right],
$$
where
\begin{gather*}
F^{(0)}_{p,q}(t)=t^{p+q}-\cox{\mathbb A_{p-1}}\cox{\mathbb A_{q-1}},\\
F^{(1)}_{p,q}(t)=t^{p+q-2}(t-1)-\left(t^{p-2}+1\right)\cox{\mathbb A_{q-1}} \text{ and }\\
F^{(2)}_{p,q}(t)=t^{p+q-4}(t-1)^2-\left(t^{p-2}+1\right)\left(t^{q-2}+1\right).
\end{gather*}

(b)The Coxeter polynomial $\cox{S^{(3)}_{p,q,r}}$ is given by the formula 
$$
\cox{\Salem{3}{p}{q}{r}}=(t+1)^3\left[t^rF^{(3)}_{p,q}(t)+\left(F^{(3)}_{p,q}\right)^*(t)\right],
$$
where $F^{(3)}_{p,q}(t)=F^{(2)}_{p,q}(t)$.
\end{thm}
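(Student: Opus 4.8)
\emph{Proof plan.}
The plan is to prove all four formulas by induction on the superscript $i$, using the two recursions of \cref{Theorem:Recursive relation for Coxeter polynomials} as the only engine and reducing every manipulation to the self-reciprocity of $\cox{\mathbb A_{m}}=\frac{t^{m+1}-1}{t-1}$ guaranteed by \cref{Corollary}(c). Throughout I write $f^*(t)=t^{\deg f}f(1/t)$ and use that $\cox{\mathbb A_m}^*=\cox{\mathbb A_m}$.

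For the base case $i=0$ I would read the first recursion as the linear recurrence $x_p=(t+1)x_{p-1}-tx_{p-2}$ in the first parameter. Its characteristic polynomial $x^2-(t+1)x+t=(x-1)(x-t)$ has roots $1,t$, so any solution is of the form $A(t)t^p+B(t)$. Expanding the reciprocal gives $(F^{(0)}_{p,q})^*=1-t^2\cox{\mathbb A_{p-1}}\cox{\mathbb A_{q-1}}$, whence the proposed right-hand side equals
$$\frac{t^{p+q+r+2}-1}{t-1}-t^2\,\cox{\mathbb A_{p-1}}\cox{\mathbb A_{q-1}}\cox{\mathbb A_{r-1}},$$
which is visibly symmetric in $p,q,r$ and is of the form $A(t)t^p+B(t)$ in each variable; hence it satisfies the recurrence. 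It then suffices to match finitely many initial trees: inducting on $N=p+q+r$, whenever some index is $\geq2$ I place it first (using that the polynomial is unchanged under permuting the legs) and peel it off by the recurrence, landing on strictly smaller trees, so the induction bottoms out at the $\Salem{0}{a}{b}{c}$ with $a,b,c\in\{0,1\}$. Those with a zero index are the paths $\mathbb A_{a+b+c+1}$, for which the displayed formula collapses correctly to $\frac{t^{a+b+c+2}-1}{t-1}$, and the only genuine computation left is $\Salem{0}{1}{1}{1}=\mathbb D_4$, where both sides equal $(t+1)^2(t^2-t+1)$.

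For the inductive step $i\in\{1,2,3\}$ I would substitute the already-proved $(i-1)$-formula into the second recursion. The key simplification is that the third parameter $s$ enters $\cox{\Salem{i-1}{q}{r}{s}}=\frac{(t+1)^{i-1}}{t-1}\big[t^{s+2}F^{(i-1)}_{q,r}-(F^{(i-1)}_{q,r})^*\big]$ only through $t^{s+2}$, so putting $s=p-1,p-3$ makes the bracket telescope:
$$\cox{\Salem{i}{p}{q}{r}}=(t+1)\big[\cox{\Salem{i-1}{q}{r}{p-1}}-t\,\cox{\Salem{i-1}{q}{r}{p-3}}\big]=(t+1)^{i}\big[t^{p}F^{(i-1)}_{q,r}+(F^{(i-1)}_{q,r})^*\big].$$
It remains to identify this with the asserted closed form, i.e.\ to prove the algebraic identities $t^{p}F^{(i-1)}_{q,r}+(F^{(i-1)}_{q,r})^*=\frac{1}{t-1}\big[t^{r+2}F^{(i)}_{p,q}-(F^{(i)}_{p,q})^*\big]$ for $i=1,2$, and for $i=3$ that $t^{p}F^{(2)}_{q,r}+(F^{(2)}_{q,r})^*$ is symmetric in $p,q,r$, so that it already coincides with $t^{r}F^{(2)}_{p,q}+(F^{(2)}_{p,q})^*=t^{r}F^{(3)}_{p,q}+(F^{(3)}_{p,q})^*$. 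Each identity is checked by expanding every reciprocal through $\cox{\mathbb A_m}^*=\cox{\mathbb A_m}$ and collecting terms, the spurious factor $t-1$ cancelling because $\frac{t^r-1}{t-1}=\cox{\mathbb A_{r-1}}$.

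The main obstacle is exactly this last bookkeeping, and its subtlety is a change of roles among the parameters. The expression produced by the recursion, $t^{p}F^{(i-1)}_{q,r}+(F^{(i-1)}_{q,r})^*$, is manifestly symmetric in the two $\mathbb A$-legs $q,r$ but hides $p$ inside $t^p$, whereas the target $\frac{1}{t-1}[t^{r+2}F^{(i)}_{p,q}-(F^{(i)}_{p,q})^*]$ buries $p$ inside $F^{(i)}_{p,q}$ and singles out $r$; proving equality forces the fully expanded polynomial to exhibit the symmetry the tree actually has (in $q,r$ for $i=1,2$, in all of $p,q,r$ for $i=3$). Keeping the degrees straight when forming the reciprocals---$\deg F^{(0)}_{p,q}=p+q$, $\deg F^{(1)}_{p,q}=p+q-1$, $\deg F^{(2)}_{p,q}=p+q-2$---and tracking the cancellation of $t-1$ are the only places an error can creep in; everything else is mechanical.
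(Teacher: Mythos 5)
Your plan is correct, and it reaches the stated formulas by a route that differs from the paper's in two places worth noting. For the base case $i=0$ the paper does not solve the linear recurrence: it applies \cref{Proposition:Cut theorem} once to the central edge $(v,u_1)$, obtains $\cox{\mathbb A_p}\cox{\mathbb A_{q+r+1}}-t\cox{\mathbb A_{p-1}}\cox{\mathbb A_q}\cox{\mathbb A_r}$, and massages this directly into the $F^{(0)}_{p,q}$ form; your alternative --- observing that the closed form equals $\cox{\mathbb A_{p+q+r+1}}-t^2\cox{\mathbb A_{p-1}}\cox{\mathbb A_{q-1}}\cox{\mathbb A_{r-1}}$, is of the shape $A(t)t^p+B(t)$ in each leg, hence satisfies the recurrence $x_p=(t+1)x_{p-1}-tx_{p-2}$, and then matching the finitely many initial trees with indices in $\{0,1\}$ --- is sound and arguably more conceptual, at the cost of needing the leg-permutation symmetry explicitly. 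For the inductive step, the paper telescopes in the \emph{opposite} variable: for $i=1,2$ it keeps $r$ in the exponent $t^{r+2}$ and reduces everything to the two-variable identity $F^{(i)}_{p,q}=F^{(i-1)}_{p-1,q}-tF^{(i-1)}_{p-3,q}$, and only for $i=3$ does it cut the $r$-leg and telescope. You instead telescope in the new $\mathbb D$-leg parameter $p$ for all of $i=1,2,3$, which lands you first on the form $(t+1)^i\bigl[t^pF^{(i-1)}_{q,r}+(F^{(i-1)}_{q,r})^*\bigr]$ --- precisely the alternative expression the paper records separately in \cref{Remark:Alternative form of the coxeter polynomials of the salem trees} --- and then requires the three-variable conversion identity $t^{p}F^{(i-1)}_{q,r}+(F^{(i-1)}_{q,r})^*=\frac{1}{t-1}\bigl[t^{r+2}F^{(i)}_{p,q}-(F^{(i)}_{p,q})^*\bigr]$ (I checked these hold for $i=1,2$, and the symmetry of the exponent multiset $\{p+q+r-4,\,p+q-2,\,p+r-2,\,q+r-2,\,p,\,q,\,r,\,2\}$ does give the $i=3$ case). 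So your verification burden is slightly heavier than the paper's two-variable identity, but in exchange you prove the theorem and the alternative forms of \cref{Remark:Alternative form of the coxeter polynomials of the salem trees} in one pass; both arguments share the same implicit restriction $p\geq3$ inherited from \cref{Theorem:Recursive relation for Coxeter polynomials}.
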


\begin{thm}
\label{Theorem:Limit theorem for spectrum radius}
Let $\rho\left(\coxwo{S^{(i)}_{p,q,r}}\right)$ be the spectral radius of the 
Coxeter transfor-mation of $S^{(i)}_{p,q,r}$. Then we have
\begin{enumerate}
\item $\displaystyle \lim_{r\to\infty} \rho\left(\coxwo{S^{(i)}_{p,q,r}}\right)=\rho\left(F^{(i)}_{p,q}(t)\right)$ and
$\rho\left(F^{(i)}_{p,q}(t)\right)$ is a Pisot number for $i=0,1,2$,
\item $\displaystyle \lim_{p\to\infty} \rho\left(\coxwo{S^{(i)}_{p,q,r}}\right)=
\rho\left(F^{(i-1)}_{q,r}(t)\right)$ for $i=1,2,3$,
\item  $\displaystyle \lim_{p,q\to\infty} \rho\left(\coxwo{S^{(i)}_{p,q,r}}\right)=
\rho\left(t^{r+2}-2t^{r+1}+1\right)$ for $i=0,1,2$,
\item  $\displaystyle \lim_{q,r\to\infty} \rho\left(\coxwo{S^{(i)}_{p,q,r}}\right)=
\rho\left(t^{p}-2t^{p-1}-1\right)$ for $i=1,2,3$ and
\item  $\displaystyle \lim_{p,q,r\to\infty} \rho\left(\coxwo{S^{(i)}_{p,q,r}}\right)=2$ for all $i=0,1,2,3$.
\end{enumerate}
\end{thm}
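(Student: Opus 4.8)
The plan is to reduce all five limits to one mechanism: the behaviour of the unique root of modulus greater than $1$ of a polynomial of the form $t^N F(t)\mp F^*(t)$ as $N\to\infty$. By \cref{Theorem:Coxeter polynomials of the graphs} the prefactor $\frac{(t+1)^i}{t-1}$ contributes only roots on the unit circle: the bracket is divisible by $t-1$, giving a cancelling root at $1$, while $(t+1)^i$ contributes roots at $-1$. Hence $\rho\bigl(\coxwo{\Salem{i}{p}{q}{r}}\bigr)$ is the largest modulus among the roots of $g_r(t)=t^{r+2}F^{(i)}_{p,q}(t)-(F^{(i)}_{p,q})^*(t)$ for $i\le2$, and of $t^rF^{(3)}_{p,q}(t)+(F^{(3)}_{p,q})^*(t)$ for $i=3$. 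To isolate the role of $p$ I would first substitute the formulas of \cref{Theorem:Coxeter polynomials of the graphs} into the recursion of \cref{Theorem:Recursive relation for Coxeter polynomials}; the bracket collapses and yields, for $i\ge1$, the $p$-adapted identity $\coxwo{\Salem{i}{p}{q}{r}}=(t+1)^i\bigl[t^{p}F^{(i-1)}_{q,r}(t)+(F^{(i-1)}_{q,r})^*(t)\bigr]$, in which $p$ appears as the large exponent.

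The analytic engine is a Rouch\'e--Hurwitz lemma. If $F$ is a fixed polynomial with exactly one root $\beta$ satisfying $|\beta|>1$, then for small $\varepsilon>0$ and large $R$ the inequality $|t^NF(t)|>|F^*(t)|$ holds on the circles $|t|=1+\varepsilon$ and $|t|=R$ once $N$ is large; consequently $t^NF\mp F^*$ has exactly one zero in the annulus $1+\varepsilon<|t|<R$, and by Hurwitz's theorem this zero tends to $\beta$ as $N\to\infty$. It is real, since a non-real zero would force its complex conjugate to be a second zero in the annulus. Taking $N=r+2$ proves (1) and taking $N=p$ in the $p$-adapted identity proves (2), the limits being $\rho(F^{(i)}_{p,q})$ and $\rho(F^{(i-1)}_{q,r})$, provided the relevant $F$ has a single root off the unit circle.

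Establishing that single-root property, and upgrading it to the Pisot assertion in (1), is the heart of the matter and the step I expect to be hardest. I would derive it from the fact that $\Salem{i}{p}{q}{r}$ is a Salem tree: deleting the central join vertex leaves a disjoint union of genuine Dynkin diagrams of types $\mathbb A$ and $\mathbb D$, each of adjacency spectral radius strictly below $2$, so by Cauchy interlacing $\ad_{\Salem{i}{p}{q}{r}}$ has at most one eigenvalue exceeding $2$. Through the classical relation $\lambda+\lambda^{-1}=\mu^2-2$ between the eigenvalues $\lambda$ of the Coxeter transformation of a bipartite graph and the eigenvalues $\mu$ of its adjacency matrix, this forces $g_r$ to have at most one root outside the unit circle. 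Letting $r\to\infty$ and applying Hurwitz once more transfers the bound to $F^{(i)}_{p,q}$; a sign count (for instance $F^{(i)}_{p,q}(1)<0$ while $F^{(i)}_{p,q}$ is monic) then produces a real root $\beta>1$, so there is exactly one root off the circle. To conclude that $\beta$ is Pisot rather than Salem I would verify that $F^{(i)}_{p,q}$ is not self-reciprocal and that the minimal polynomial of $\beta$ carries no root on the unit circle; this reciprocity bookkeeping is the most delicate point.

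The iterated limits (3)--(5) follow by repeating the extraction. Fixing $t$ with $|t|>1$, dividing the relevant polynomial by the highest power of the variable being sent to infinity and discarding the terms that then vanish, I read off a new dominant polynomial whose root greater than $1$ is the next limit; a fresh application of the lemma disposes of the next variable. Applied to $g_r$ with $r$ held fixed, successively letting $q\to\infty$ and then $p\to\infty$ drives the dominant polynomial to $t^{r+2}-2t^{r+1}+1$, giving (3); applied after first taking the $r$-limit of (1), letting $q\to\infty$ drives $\rho(F^{(i)}_{p,q})$ to $\rho(t^{p}-2t^{p-1}-1)$, giving (4). For (5) I combine these: the root of $t^{r+1}(t-2)+1$ lying in $(1,2)$ and the root of $t^{p-1}(t-2)-1$ lying in $(2,\infty)$ both tend to $2$ as the exponent grows, so the triple limit equals $2$ for every $i$. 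At each stage one must check that the extracted dominant polynomial again has a single root greater than $1$, bounded away from $1$ and from $\infty$, so that the successive Hurwitz arguments remain valid.
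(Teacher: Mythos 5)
Your overall strategy for the five limits is sound and runs parallel to the paper's: both arguments hinge on writing $\coxwo{\Salem{i}{p}{q}{r}}$ (up to factors of $t\pm1$) as $t^{N}F(t)\mp F^*(t)$ with the relevant variable isolated in the exponent $N$, and then tracking the unique root outside the unit circle as $N\to\infty$. Where the paper gets convergence cheaply from Hoffman--Smith monotonicity (\cref{Lemma: Monotonic}) plus boundedness and then identifies the limit with the elementary continuity statement of \cref{Lemma:Convergence of roots}, you use Rouch\'e on the circles $|t|=1+\varepsilon$ and $|t|=R$ together with Hurwitz applied to $F-t^{-N}F^*\to F$ on $|t|>1$; this is a legitimate alternative, and your interlacing/bipartite derivation of the ``at most one eigenvalue outside the unit circle'' input recovers what the paper simply imports from McKee--Smyth. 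The iterated limits (3)--(5) via successive dominant polynomials match the paper's computation, including the observation that the roots of $t^{r+1}(t-2)+1$ and $t^{p-1}(t-2)-1$ tend to $2$.

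The genuine gap is the Pisot assertion in (1). Your plan --- ``verify that $F^{(i)}_{p,q}$ is not self-reciprocal and that the minimal polynomial of $\beta$ carries no root on the unit circle'' --- is not an argument: the second clause is precisely the statement to be proved, and the first is insufficient, because $F^{(i)}_{p,q}$ is in general reducible (e.g.\ $F^{(0)}_{1,1}=t^2-1$), so the irreducible factor containing $\beta$ could be self-reciprocal even though $F^{(i)}_{p,q}$ is not. The paper closes exactly this hole with a monotonicity contradiction: if $\beta$ were a Salem number, its (self-reciprocal) minimal polynomial would divide both $F^{(i)}_{p,q}$ and $\bigl(F^{(i)}_{p,q}\bigr)^*$, whence $t^{r+2}F^{(i)}_{p,q}(t)-\bigl(F^{(i)}_{p,q}\bigr)^*(t)$ vanishes at $\beta$ for \emph{every} $r$, making $\rho\bigl(\coxwo{\Salem{i}{p}{q}{r}}\bigr)$ eventually constant equal to $\beta$ --- contradicting the strict increase in $r$ guaranteed by the equality clause of \cref{Lemma: Monotonic} for non-cyclotomic trees. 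You need this (or an equivalent) step; without it the Pisot claim is unproved. Separately, you should dispose of the degenerate cyclotomic cases (where $F^{(i)}_{p,q}$ has no root outside the unit circle and your Rouch\'e annulus is empty) by noting that there both sides of (1) equal $1$.
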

\begin{thm}
\label{Theorem:General limit theorem for spectrum radius}
For $k,p_1,\ldots,p_k\in\mathbb N$ and all $i\in\{0,1,\ldots,k\}$ we have
$$
\lim_{p_1,\ldots,p_k\to\infty}\rho\left(\coxwo{\Salem{i}{p_1}{\ldots}{p_k}}\right)=k-1.
$$
\end{thm}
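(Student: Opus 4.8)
The plan is to induct on $i$, the base case $i=0$ being exactly Lakatos' theorem recalled above. Throughout I take $k\ge 2$, which is the regime in which the second recursion of \cref{Theorem:Recursive relation for Coxeter polynomials} applies. A structural lemma I would prove first is that the iterated limit is independent of the order in which the $p_j$ are sent to infinity. Since a tree is bipartite, the Coxeter eigenvalues $\lambda$ and the adjacency eigenvalues $\mu$ obey $\lambda+\lambda^{-1}=\mu^{2}-2$; hence $\rho\bigl(\coxwo{\Salem{i}{p_1}{\ldots}{p_k}}\bigr)$ is a nondecreasing function of the adjacency spectral radius, which by Perron--Frobenius is itself nondecreasing in each $p_j$ (deleting the outermost vertex of an arm exhibits the shorter tree as an induced subgraph) and bounded. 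A bounded array monotone in each index has order-independent iterated limits, so I may send the $p_j$ to infinity in any convenient order.

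For the inductive step, assume the result for type $i-1$ and apply the second recursion,
$$
\cox{\Salem{i}{p_1}{\ldots}{p_k}}=(t+1)\left[\cox{\Salem{i-1}{p_2}{\ldots}{p_k,p_1-1}}-t\,\cox{\Salem{i-1}{p_2}{\ldots}{p_k,p_1-3}}\right].
$$
In $\Salem{i-1}{p_2}{\ldots}{p_k,m}$ the last slot is an $\mathbb A$-arm, and shortening an $\mathbb A$-arm satisfies the first recursion of \cref{Theorem:Recursive relation for Coxeter polynomials} by the same pendant-vertex expansion; thus, viewed as a sequence in $m$, $\cox{\Salem{i-1}{p_2}{\ldots}{p_k,m}}$ solves the linear recurrence with characteristic roots $t,1$, giving $\cox{\Salem{i-1}{p_2}{\ldots}{p_k,m}}=C_{p_2,\ldots,p_k}(t)\,t^{m}+D_{p_2,\ldots,p_k}(t)$ for fixed polynomials $C,D$. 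Setting $m=p_1-1$ and $m=p_1-3$ collapses the bracket, and one obtains
$$
\cox{\Salem{i}{p_1}{\ldots}{p_k}}=(t+1)(t-1)\bigl(C_{p_2,\ldots,p_k}(t)\,t^{p_1-2}-D_{p_2,\ldots,p_k}(t)\bigr).
$$

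Now I send $p_1\to\infty$ first. This is the large-exponent situation already exploited in \cref{Theorem:Limit theorem for spectrum radius}(1): for $|t|>1$ the term $C\,t^{p_1-2}$ dominates, the root governing the spectral radius converges to the largest root of $C_{p_2,\ldots,p_k}$, and that value is Pisot; the cyclotomic factors $(t+1)(t-1)$ contribute only $\pm1$ and are harmless since the limit is $\ge 1$. The identical mechanism applied to $C\,t^{m}+D$ yields $\lim_{m\to\infty}\rho\bigl(\coxwo{\Salem{i-1}{p_2}{\ldots}{p_k,m}}\bigr)=\rho(C_{p_2,\ldots,p_k})$, so that
$$
\lim_{p_1\to\infty}\rho\bigl(\coxwo{\Salem{i}{p_1}{\ldots}{p_k}}\bigr)=\rho\bigl(C_{p_2,\ldots,p_k}\bigr)=\lim_{m\to\infty}\rho\bigl(\coxwo{\Salem{i-1}{p_2}{\ldots}{p_k,m}}\bigr).
$$
Sending the remaining variables $p_2,\ldots,p_k$ to infinity, the right-hand side becomes the full iterated limit of $\rho$ for the type $i-1$ tree with parameters $p_2,\ldots,p_k,m$, which equals $k-1$ by the inductive hypothesis; the order-independence from the first paragraph matches this with the prescribed order $\lim_{p_1}\cdots\lim_{p_k}$, closing the induction.

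The delicate point is the large-exponent limit: one must verify that $C_{p_2,\ldots,p_k}\not\equiv 0$ and that its dominant root is the real number exceeding $1$ that controls $\rho$, rather than a root on or inside the unit circle, and that this persists under the subsequent inner limits. This is precisely the analysis behind the Pisot conclusion of \cref{Theorem:Limit theorem for spectrum radius}, which I would reuse. The other ingredient requiring care is the order-independence of the iterated limits, for which the monotonicity argument via the Coxeter--adjacency correspondence is the cleanest route.
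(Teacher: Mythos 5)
Your overall architecture differs from the paper's: you induct on $i$ using the second recursion of \cref{Theorem:Recursive relation for Coxeter polynomials}, whereas the paper never changes $i$ at all — it peels off one arm at a time with \cref{Proposition:Cut theorem}, writing $\cox{\Salem{i}{p_1}{\ldots}{p_k}}=\frac{t^{p_k+1}F(t)-F^*(t)}{t-1}$ and taking the limits in exactly the prescribed order ($p_k$ innermost, $p_1$ last), ending with the explicit polynomial $t^{p_1}-(k-1)t^{p_1-1}-(k-2)$ whose dominant root visibly tends to $k-1$. Your central algebraic observation — that shortening an $\mathbb A$-arm satisfies the linear recurrence with characteristic roots $t$ and $1$, so the Coxeter polynomial has the form $C(t)t^{m}+D(t)$, and the bracket in the recursion collapses to $(t-1)(Ct^{p_1-2}-D)$ — is correct and is essentially the same mechanism the paper exploits. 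The difficulty is that your recursion moves $p_1$ into the \emph{last} slot of the type-$(i-1)$ tree, so to invoke the inductive hypothesis you must send $p_1$ to infinity \emph{first}, while the theorem's iterated limit requires it to be sent to infinity \emph{last}. Your proof therefore stands or falls on the order-independence lemma of your first paragraph.

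That lemma, as justified, is wrong. The claim that $\rho$ is nondecreasing in every $p_j$ contradicts \cref{Lemma: Monotonic}: for $j\leq i$ (the $\mathbb D$-arms) lengthening the arm makes the spectral radius \emph{nonincreasing} — this is exactly the Hoffman–Smith phenomenon, because lengthening a $\mathbb D$-arm inserts a vertex into an internal path rather than appending a pendant vertex, and the shorter tree is \emph{not} an induced subgraph of the longer one (deleting a fork vertex of $\mathbb D_{p_j+1}$ yields an $\mathbb A$-arm, not $\mathbb D_{p_j}$). With monotonicity in mixed directions, iterated limits need not be order-independent: for $a_{m,n}=1$ if $m\geq n$ and $0$ otherwise, $a$ is bounded, nondecreasing in $m$, nonincreasing in $n$, yet $\lim_m\lim_n a_{m,n}=0\neq 1=\lim_n\lim_m a_{m,n}$. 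So you need a genuine argument that the reordering is harmless — for instance, establishing the joint (multivariable) limit via uniform estimates on the roots of $Ct^{m}+D$, or following the paper and arranging the induction so that the limits are taken in the prescribed order from the start. A secondary caveat: your appeal to the Pisot/Rouch\'e analysis of \cref{Theorem:Limit theorem for spectrum radius}(1) to identify $\lim_{p_1}\rho$ with $\rho(C)$ is reasonable, but that theorem is stated only for $k=3$; for general $k$ you must separately know that $\Salem{i}{p_1}{\ldots}{p_k}$ has at most one root outside the unit circle (the paper gets this from the McKee–Smyth classification cited in \cref{Remark:Dynkin diagrams}(b)), and you should handle $k=2$, where the limit $k-1=1$ is attained by cyclotomic trees and the dominant-root argument degenerates.
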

\begin{rem}
\label{Remark:Dynkin diagrams}

(a) Note that for $i=1$ or $i=3$ the trees $\Salem{i}{p}{q}{r}$ and $\Salem{i}{r}{q}{p}$
are the same and therefore the case $i=3$ in (1) of \cref{Theorem:Limit theorem for spectrum radius}
is given in (2). Similarly the limit $\lim_{p\to\infty} \rho\left(\cox{S^{(0)}_{p,q,r}}\right)$
can be found using the result of (1).
The same holds for the cases of (3) and (4); the double limit
$\lim_{p,q\to\infty} \rho\left(\cox{S^{(3)}_{p,q,r}}\right)$ is obtained from (4) and
$\lim_{q,r\to\infty} \rho\left(\coxwo{S^{(i)}_{p,q,r}}\right)$ from (3).

(b) In \cite{Mckee} it was shown  by James McKee and Chris Smyth that if a noncyclotomic tree is
the join of cyclotomic trees then it is a Salem tree.
The cyclotomic trees were classified in \cite{smith_some_properties_of_the_spectrum_of_a_graph};
they are the subgraphs of the Euclidean diagram
$\widetilde{\mathbb E}_8=\mathbb E_9$ and of the Euclidean diagrams of \Cref{Fig:Euclidean_graphs}
(see also \cite{Mckee,delapena}).
In \cite{Mckee} the Salem trees were classified and they
include the joins of cyclotomic trees which are not cyclotomic.
It follows from this classification that the cyclotomic cases of
the trees $\Salem{i}{p_1}{\ldots}{p_k}$ are those for $k=i=2$ or
$k=3,i=0,p_1=p_2=p_3=2$ or $k=3,i=0,p_1=1,p_2=p_3=3$ or
$k=3,i=0,p_1=1,p_2=2,p_3=5$ and subgraphs of these. For all the other cases,
$\Salem{i}{p_1}{\ldots}{p_k}$ are Salem trees.

(c) We recall that the Mahler measure of a monic integer polynomial $f(t)$
is $M(f)=\prod\{|z|:z\in Z(f(t)), |z|\geq1\}$ (see \cite{smythsurveyonmahlermeasure}).
We can easily see that if $f$ is cyclotomic, Salem or Pisot then its Mahler measure is $M(f)=\rho(f(t))$.
Lehmer's problem asks if we can chose $f$ with Mahler measure arbitrarily close to 1.
Since the polynomials $\cox{\Salem{i}{p_1}{\ldots}{p_k}}$ have at most one root out\-si\-de the
unit circle it follows that their Mahler measure is $\rho(\cox{\Salem{i}{p_1}{\ldots}{p_k}})$.
\cref{Theorem:Coxeter polynomials of the graphs} in connection with \cref{Lemma: Monotonic} can be used to verify Lehmer's conjecture for the
family of the polynomials $\cox{\Salem{i}{p}{q}{r}}$, asserting that the smallest Mahler measure, larger than 1,
is the Mahler measure of the polynomial $\cox{\Salem{0}{1}{2}{6}}=\cox{\mathbb E_{10}}$
(see also \cite{Mckee} and the recent papers \cite{delapena_tubes_in_derived,delapena_onthemahlermeasure}).

\end{rem}
\begin{exa}
For the case of the Dynkin diagrams $\mathbb D_n$,
\cref{Theorem:Coxeter polynomials of the graphs} gives
\begin{eqnarray*}
\cox{\mathbb D_n}&=&\cox{\Salem{0}{1}{1}{n-3}}\\
&=&\frac{1}{t-1}\left(t^{n-1}(t^2-1)+t^2-1\right)=t^n+t^{n-1}+t+1.
\end{eqnarray*}
For the Euclidean diagrams $\widetilde{\mathbb D}_n$,
\cref{Theorem:Coxeter polynomials of the graphs} gives
\begin{eqnarray*}
\cox{\widetilde{\mathbb D}_n}&=&\coxwo{\Salem{1}{n-2}{1}{1}}\\
&=&\frac{t+1}{t-1}\left[t^3(t^{n-2}-t^{n-3}-t^{n-4}-1)+t^{n-2}+t^2+t-1\right]\\
&=&\left(t^{n-2}-1\right)(t-1)\left(t+1\right)^2
\end{eqnarray*}
and for the diagrams $\mathbb E_n$ it gives
$$
\coxwo{\mathbb E_n}=\coxwo{\Salem{0}{1}{2}{n-4}}=\frac{1}{t-1}\left[t^{n-2}(t^3-t-1)+t^3+t^2-1\right].
$$
All these agree with the known formulas of the Coxeter polynomials of the diagrams
$\mathbb D_n, \widetilde{\mathbb D}_n$ and $\mathbb E_n$
(see \cite{damianou2014,gross} and \cite[Proposition 2.3]{simson_a_coxeter-gram_classification}).
\end{exa}
We also prove the following theorem concerning joins of trees. 
\begin{thm}\label{Theorem:Multiplicity theorem}
Let $\cal{T}$ be the join of the trees $\cal{T}^{(1)},\ldots,\cal{T}^{(k)}, \ k\geq2$.
Suppose that $z$ is a root of the polynomial $\cox{\cal{T}^{(i)}}$ with multiplicity $m_i$.
Then $z$ is also a root of the polynomial $\cox{\cal{T}}$ with multiplicity at least
$$
\min\{m-m_i:i=1,2,\ldots,k\}
$$ 
where $m=m_1+m_2+\ldots+m_k$.
\end{thm}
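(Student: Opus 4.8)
The plan is to reduce the statement to a single \emph{join formula} expressing $\cox{\cal{T}}$ through the Coxeter polynomials of the constituent trees and of their one-vertex deletions, and then to read off the order of vanishing at $z$ term by term. Write $u_i$ for the join vertex (the vertex numbered $1$) of $\cal{T}^{(i)}$, and let $\cal{T}^{(i)}\setminus u_i$ denote the forest obtained by deleting $u_i$. The key lemma I would prove is
\[
\cox{\cal{T}}=(t+1)\prod_{i=1}^{k}\cox{\cal{T}^{(i)}}-t\sum_{i=1}^{k}\cox{\cal{T}^{(i)}\setminus u_i}\prod_{j\neq i}\cox{\cal{T}^{(j)}}.
\]

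To establish it, I would pass through the adjacency characteristic polynomial. For any forest $F$ on $N$ vertices one has the classical identity $\operatorname{cox}_F(t^{2})=t^{N}\chi_F\!\left(t+t^{-1}\right)$, which for a tree can be extracted from the Gram-matrix description of $\operatorname{Cox}_{\cal{T}}$ in \cref{Corollary}; since both sides are multiplicative over connected components, the forest case follows from the tree case. Applying the standard vertex-deletion recursion for characteristic polynomials at the central vertex $v$ of the join — where all cycle contributions vanish because $\cal{T}$ is a tree — gives
\[
\chi_{\cal{T}}(x)=x\prod_{i}\chi_{\cal{T}^{(i)}}(x)-\sum_{i}\chi_{\cal{T}^{(i)}\setminus u_i}(x)\prod_{j\neq i}\chi_{\cal{T}^{(j)}}(x).
\]
Substituting $x=t+t^{-1}$, multiplying by $t^{N}$ with $N=|\cal{T}_0|=1+\sum_i|\cal{T}^{(i)}_0|$, and tracking the powers of $t$ turns each adjacency factor into the corresponding Coxeter polynomial evaluated at $t^{2}$: the leading $x$ produces the factor $t^{2}+1$, while each deletion summand acquires a factor $t^{2}$. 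As every polynomial so obtained is even in $t$, replacing $t^{2}$ by $t$ yields the displayed join formula.

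With the formula in hand the multiplicity count is immediate. Set $d=\min\{m-m_i:i=1,\dots,k\}$, and note $d\le m$ because each $m_i\ge 0$. In the first term the factor $\prod_i\cox{\cal{T}^{(i)}}$ vanishes at $z$ to order $\sum_i m_i=m\ge d$. In the $i$-th summand of the second term the factor $\prod_{j\neq i}\cox{\cal{T}^{(j)}}$ vanishes at $z$ to order $\sum_{j\neq i}m_j=m-m_i\ge d$, and the extra factor $\cox{\cal{T}^{(i)}\setminus u_i}$ can only raise this order. Hence $(t-z)^{d}$ divides every term on the right-hand side, so it divides $\cox{\cal{T}}$, which is precisely the assertion that $z$ is a root of $\cox{\cal{T}}$ of multiplicity at least $d$.

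The only real work is the join formula; the main obstacle is the bookkeeping of the degree shifts under $x=t+t^{-1}$ that converts the adjacency deletion identity into the Coxeter identity with the precise coefficients $t+1$ and $-t$. Once those coefficients are pinned down, the divisibility argument needs nothing about the deleted-vertex polynomials beyond the trivial fact that their order of vanishing at $z$ is nonnegative.
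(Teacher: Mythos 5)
Your proof is correct, and it arrives at exactly the same key identity as the paper, namely
\[
\cox{\cal{T}}=(t+1)\prod_{i=1}^{k}\cox{\cal{T}^{(i)}}-t\sum_{i=1}^{k}\cox{\cal{T}^{(i)}\setminus u_i}\prod_{j\neq i}\cox{\cal{T}^{(j)}},
\]
after which your term-by-term count of the order of vanishing at $z$ coincides with the paper's. The difference lies in how the identity is obtained. The paper applies \cref{Proposition:Cut theorem} (the Subbotin--Sumin edge-splitting formula) successively to the edges joining the hub $v$ to the trees $\cal{T}^{(i)}$; the resulting telescoping sum yields the formula using nothing beyond that proposition, so the derivation is self-contained within the paper. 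You instead pass to the adjacency characteristic polynomial, apply the vertex-deletion recursion at the hub (where the cycle terms vanish because $\cal{T}$ is a tree), and transport the result back through the classical correspondence $\operatorname{cox}_F(t^{2})=t^{N}\chi_F(t+t^{-1})$ for forests. That route is shorter, avoids the induction, and makes the coefficients $t+1$ and $-t$ transparent as the images of the factors $x$ and $-1$ under the substitution; but it imports an identity that the paper never states or proves (it is the standard tool of McKee--Smyth), and your remark that it ``can be extracted from'' \cref{Corollary} is the one place where real work is being waved at: one must reduce $\det\bigl(t^{2}\check{G}_{\cal{T}}^{\operatorname{tr}}+\check{G}_{\cal{T}}\bigr)$ to $t^{N}\det\bigl((t+t^{-1})I_N-\ad_{\cal{T}}\bigr)$ by a diagonal conjugation, which is exactly where the tree (or at least bipartite) hypothesis enters, and the multiplicativity over components also deserves the one line you give it. With that identity either cited or proved, your argument is a complete and valid alternative; the paper's version trades this external input for an inductive bookkeeping of the partial joins $\cal{T}^{[i]}$.
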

\begin{rem}
(a)According to \cite{steko2} if the common root $z$ of the polynomials
$\cox{\cal{T}_1},\ldots,\cox{\cal{T}_k}$ is $z\neq\pm1$ then its multiplicity $m_i$ is $1$.
Therefore in that case
\cref{Theorem:Multiplicity theorem} gives that $z$ is a root of $\cox{\cal{T}}$ with multiplicity at least $k-1$.
This result was proved in \cite[Theorem 3.1]{gross}. 
For $z=\pm1$ however, $z$ can be a root of $\cox{\cal{T}}$ with multiplicity less than $k-1$.
For example, consider the join $\cal{T}$ of the Euclidean diagrams $\widetilde{\mathbb D}_4$ as shown in
\cref{Fig:Join of D_3 affine}.
The polynomials $\cox{\cal{T}}$ and $\cox{\widetilde{\mathbb D}_4}$ both have $1$ as a root with multiplicity $2$.

(b)Now suppose that $\cal{T}$ is the join 
of the trees $\cal{T}_1, \cal{T}_2$ and $z$ is the common root of the Coxeter polynomials
$\cox{\cal{T}_1}$ and $\cox{\cal{T}_2}$. Then \Cref{Theorem:Multiplicity theorem}
generalizes a theorem due to Kolmykov \cite{steko} (see also \cite[Theorem 1.5]{gross}) asserting that $z$
is a root of the Coxeter polynomial $\cox{\cal{T}}$.
\end{rem}

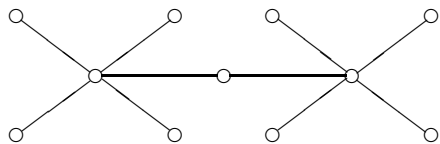
\begin{figure}[ht]
\centering
\begin{picture}(195,55)
\put(20,49){\circle{5}}
\put(80,49){\circle{5}}
\put(50,26){\circle{5}}
\put(21.8,47.2){\line( 4,-3){26}}
\put(78.3,47.3){\line(-4,-3){26.2}}
\put(20,4){\circle{5}}
\put(80,4){\circle{5}}
\put(22.2,5.2){\line( 4, 3){25.6}}
\put(78,5.8){\line(-4, 3){25.4}}
\put(52.2,26.3){\line( 1, 0){43.9}}
\put(98.5,26){\circle{5}}
\put(100.7,26.3){\line( 1, 0){44}}
\put(117,49){\circle{5}}
\put(177,49){\circle{5}}
\put(147,26){\circle{5}}
\put(119,47.5){\line( 4,-3){26.2}}
\put(175.3,47.3){\line(-4,-3){26.4}}
\put(117,4){\circle{5}}
\put(177,4){\circle{5}}
\put(119,5.7){\line( 4, 3){26}}
\put(174.8,5.1){\line(-4, 3){26}}
\end{picture}
\caption{The join of two $\widetilde{\mathbb D}_4$ diagrams} 
\label{Fig:Join of D_3 affine} 
\end{figure}

For the convenience of the reader we include all theorems that will be used, 
in several cases with proofs, thus making this paper self-contained. 
This is done in \cref{Section:Preliminaries}. In
\cref{Section:Proof of theorems} we prove Theorems
\ref{Theorem:Recursive relation for Coxeter polynomials},
\ref{Theorem:Coxeter polynomials of the graphs},
\ref{Theorem:Limit theorem for spectrum radius},
\ref{Theorem:General limit theorem for spectrum radius}
and \ref{Theorem:Multiplicity theorem} formulated in \cref{Section:Main_results}.

\section{Generalities on Coxeter polynomials}
\label{Section:Preliminaries}
In this section we collect and prove some results that we need in the proof of 
Theorems \ref{Theorem:Coxeter polynomials of the graphs}
\ref{Theorem:Limit theorem for spectrum radius}
\ref{Theorem:General limit theorem for spectrum radius}
and \ref{Theorem:Multiplicity theorem}.

The following proposition is due to Subbotin and Sumin and the proof we present here is taken from \cite{steko}.
\begin{proposition}
\label{Proposition:Cut theorem}
Assume that $\cal{T}=(\cal{T}_0,\cal{T}_1)$ is a tree
and let $e=(v_1,v_2)\in \cal{T}_1$ be a splitting edge of the tree $\cal{T}$ 
that splits it to the trees $\cal{R}=(\cal{R}_0,\cal{R}_1)$ and $\cal{S}=(\cal{S}_0,\cal{S}_1)$. 
Assume that $v_1\in \cal{R}_0$ and $v_2\in \cal{S}_0$. Then
$$
\cox{\cal{T}}=\cox{\cal{R}}\cox{\cal{S}}-t\cox{\tilde{\cal{R}}}\cox{\tilde{\cal{S}}}
$$
where $\tilde{\cal{R}}=(\tilde{\cal{R}}_0,\tilde{\cal{R}}_1),\tilde{\cal{S}}=(\tilde{\cal{S}}_0,\tilde{\cal{S}}_1)$
are the subgraphs of $\cal{R}, \cal{S}$ with the vertex sets
$\tilde{\cal{R}_0}=\cal{R}_0\setminus \{v_1\}$ and $\tilde{\cal{S}_0}=\cal{S}_0\setminus \{v_2\}$.
\end{proposition}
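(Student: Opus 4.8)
The plan is to reduce the statement to a single determinant identity and expand it using the block structure produced by the cut edge. The starting point is \cref{Corollary}, which lets me work with the upper triangular Gram matrix $\check{G}_{\cal{T}}$ and guarantees that $\cox{\cal{T}}$ does not depend on the enumeration of the vertices. First I would record the clean reformulation
\[
\cox{\cal{T}}=\det\!\left(\check{G}_{\cal{T}}+t\,\check{G}_{\cal{T}}^{\operatorname{tr}}\right),
\]
which follows from $\operatorname{Cox}_{\cal{T}}=-\check{G}_{\cal{T}}\check{G}_{\cal{T}}^{-\operatorname{tr}}$ together with $\det\check{G}_{\cal{T}}=1$: indeed $\det(t\cdot I_n-\operatorname{Cox}_{\cal{T}})=\det(t\cdot I_n+\check{G}_{\cal{T}}\check{G}_{\cal{T}}^{-\operatorname{tr}})$, and multiplying inside the determinant by $\check{G}_{\cal{T}}^{\operatorname{tr}}$ on the right turns this into $\det(\check{G}_{\cal{T}}+t\,\check{G}_{\cal{T}}^{\operatorname{tr}})$. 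I note that this formula, and the Coxeter polynomial itself, extend multiplicatively to forests, so there is no difficulty if $\tilde{\cal{R}}$ or $\tilde{\cal{S}}$ happens to be disconnected.

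Next, using enumeration independence, I would order the vertices of $\cal{T}$ so that all vertices of $\cal{R}$ precede all vertices of $\cal{S}$. Since $e=(v_1,v_2)$ is the only edge joining $\cal{R}$ to $\cal{S}$, the matrix $\check{G}_{\cal{T}}$ becomes block upper triangular with diagonal blocks $\check{G}_{\cal{R}},\check{G}_{\cal{S}}$ and an off-diagonal block $B$ having a single nonzero entry $-1$, in the position indexed by $(v_1,v_2)$. Writing $M:=\check{G}_{\cal{T}}+t\,\check{G}_{\cal{T}}^{\operatorname{tr}}$, the diagonal blocks are $M_{\cal{R}}=\check{G}_{\cal{R}}+t\,\check{G}_{\cal{R}}^{\operatorname{tr}}$ and $M_{\cal{S}}$, while the only cross entries are $M_{v_1,v_2}=-1$ and $M_{v_2,v_1}=-t$. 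Thus $M=M_0+UKU^{\operatorname{tr}}$, where $M_0=\operatorname{diag}(M_{\cal{R}},M_{\cal{S}})$, the columns of $U$ are the standard basis vectors indexed by $v_1$ and $v_2$, and $K=\left(\begin{smallmatrix}0&-1\\-t&0\end{smallmatrix}\right)$.

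The heart of the argument is then Sylvester's determinant identity (the matrix determinant lemma)
\[
\det M=\det M_0\cdot\det\!\left(I_2+KU^{\operatorname{tr}}M_0^{-1}U\right),
\]
valid over the field $\mathbb{C}(t)$, where $M_0$ is invertible because $\cox{\cal{R}}$ and $\cox{\cal{S}}$ are nonzero polynomials. Because $v_1$ and $v_2$ lie in different diagonal blocks, $U^{\operatorname{tr}}M_0^{-1}U$ is diagonal with entries $\mu:=(M_{\cal{R}}^{-1})_{v_1v_1}$ and $\nu:=(M_{\cal{S}}^{-1})_{v_2v_2}$, so the second determinant collapses to $1-t\mu\nu$ and $\det M=\det M_{\cal{R}}\det M_{\cal{S}}(1-t\mu\nu)$. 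Finally I would identify $\det M_{\cal{R}}\cdot\mu$ with the principal minor of $M_{\cal{R}}$ obtained by deleting the row and column of $v_1$; since deleting that row and column from $\check{G}_{\cal{R}}$ yields precisely $\check{G}_{\tilde{\cal{R}}}$, this minor equals $\cox{\tilde{\cal{R}}}$, and likewise $\det M_{\cal{S}}\cdot\nu=\cox{\tilde{\cal{S}}}$. Substituting gives the claimed relation as an identity of polynomials in $t$.

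I expect the main obstacle to be keeping track of the cross term with the correct sign and the correct coefficient $-t$; this is exactly what Sylvester's identity automates. A direct Leibniz-expansion proof is equally possible: a nonzero term either respects both blocks, contributing $\cox{\cal{R}}\cox{\cal{S}}$, or it uses both cross entries $-1$ and $-t$ through the transposition $(v_1\,v_2)$, contributing $-t\,\cox{\tilde{\cal{R}}}\cox{\tilde{\cal{S}}}$. In that route one must verify the sign of the transposition by hand, whereas the rank-two update packages it cleanly.
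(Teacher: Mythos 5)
Your proof is correct, but it takes a genuinely different route from the paper's. The paper argues directly with the reflection product: choosing the enumeration in which the vertices of $\cal{R}$ come first, with $v_1$ last among them and $v_2$ first among those of $\cal{S}$, it writes $\Phi_{\cal{T}}=\Phi_{\cal{R}}\Phi_{\cal{S}}$ as a product of two explicit block matrices, then computes $\det(tI-\Phi_{\cal{T}})$ by subtracting row $k+1$ from row $k$ and Laplace-expanding along row $k$; the terms $\cox{\cal{R}}\cox{\cal{S}}$ and $-t\cox{\tilde{\cal{R}}}\cox{\tilde{\cal{S}}}$ fall directly out of that expansion. You instead pass to the symmetrized Gram matrix via $\cox{\cal{T}}=\det\bigl(\check{G}_{\cal{T}}+t\,\check{G}_{\cal{T}}^{\operatorname{tr}}\bigr)$, view the cut edge as a rank-two perturbation of $\operatorname{diag}(M_{\cal{R}},M_{\cal{S}})$, and resolve it with the matrix determinant lemma and Cramer's rule. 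I checked your bookkeeping --- the cross entries $-1$ and $-t$, $\det(I_2+KD)=1-t\mu\nu$, and $\det M_{\cal{R}}\cdot\mu=\cox{\tilde{\cal{R}}}$ --- and it is right; both arguments also rely on enumeration independence. What your route buys is automatic handling of the cross-term sign, a formulation that visibly extends to forests and to cuts of higher rank, and no need to track how reflections act across the cut; the cost is that it leans on \cref{Corollary} (the identification of the reflection-product definition with the Gram-matrix one, which the paper justifies by citation) and on inverting $M_0$ over $\mathbb{C}(t)$, whereas the paper's computation is more elementary and self-contained. The only point deserving an explicit sentence in your write-up is the degenerate case where $\cal{R}$ or $\cal{S}$ is the single vertex $v_1$ or $v_2$: there the deleted minor is the empty determinant and $\cox{\tilde{\cal{R}}}=1$, which your computation handles but silently.
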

\begin{proof}
We enumerate the vertices of $\cal{R}$ and $\cal{S}$ as
$\cal{R}_0=\{u_1,u_2,\ldots,u_k\}$ and $\cal{S}_0=\{u_{k+1},u_{k+2},\ldots,u_{k+m}\}$, 
where $v_1=u_k$ and $v_2=u_{k+1}$. 
Let $\widehat{e}=\{e_1\ldots,e_{k+m}\}$ be the standard basis for the vector space $\mathbb R^{k+m}$,
and let $V_1$ be the vector subspace of $\mathbb R^{k+m}$ with basis $\widehat{e}_1=\{e_1,e_2,\ldots,e_{k}\}$
and $V_2$ the vector subspace of $\mathbb R^{k+m}$ with basis
$\widehat{e}_2=\{e_{k+1},e_{k+2},\ldots,e_{k+m}\}$. Also let $\sigma_i$ be
the $i$th reflection of $\cal{T}$. Then $\Phi_\cal{R}=\sigma_1\sigma_2\ldots\sigma_k$ 
is a Coxeter tra\-nsfo\-rma\-tion of $\cal{R}, \ \Phi_\cal{S}=\sigma_{k+1}\sigma_{k+2}\ldots\sigma_{k+m}$ 
is a Coxeter tra\-nsfo\-rma\-tion of $\cal{S}$ and $\Phi_\cal{T}=\Phi_\cal{R}\Phi_\cal{S}$ is a Coxeter tra\-nsfo\-rma\-tion of $\cal{T}$.
If $R,S$ are the matrices corresponding to $\Phi_R, \Phi_S$
with respect to the bases $\widehat{e_1}, \widehat{e_2}$ then with respect to the basis $\widehat{e}$
the Coxeter transformation $\Phi_\cal{T}$ corresponds to the matrix
$$
\begin{pmatrix}
R&E_{k1}\\
0_{mk}&I_m
\end{pmatrix}
\cdot\begin{pmatrix}
I_k&0_{km}\\
E_{1k}&S
\end{pmatrix},
$$
where $E_{ij}$ is the matrix with 
all entries zero except the $i,j$ entry which is $1$ and $0_{ij}$ is the $i\times j$ 
zero matrix. The Coxeter polynomial of $\cal{T}$ is then given by 
$$
\cox{\cal{T}}=\det(tI_{k+m}-\Phi_{\cal{T}})=
\det\begin{pmatrix}
tI_k-R-E_{k,k}&-E_{k,1}S\\
-E_{1,k}&tI_m-S
\end{pmatrix}.
$$
Subtracting the $k+1$\textsuperscript{th} row from the $k$\textsuperscript{th} row we 
obtain
$$
\cox{\cal{T}}=
\det\begin{pmatrix}
tI_k-R&-tE_{k,1}\\
-E_{1,k}&tI_m-S
\end{pmatrix}.
$$
Expanding the determinant with respect to the $k$\textsuperscript{th} row 
we deduce that
$$
\cox{\cal{T}}=\cox{\cal{R}}\cox{\cal{S}}-t\cox{\tilde{\cal{R}}}\cox{\tilde{\cal{S}}}.
$$
\end{proof}
The following well-known lemma says that the eigenvalues of a bipartite graph are symmetric around $0$,
see \cite{brualdi2011,Cvetkovic2010}.
\begin{lem}
Let $\Gamma$ be a bipartite graph. If $\lambda$ is an eigenvalue of the adjacency
matrix $\ad_{\Gamma}$ of $\Gamma$ then $-\lambda$ is an eigenvalue of $\ad_{\Gamma}$.
\end{lem}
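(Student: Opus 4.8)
The plan is to exploit the block structure that the adjacency matrix acquires for a bipartite graph and to exhibit an explicit similarity between $\ad_{\Gamma}$ and $-\ad_{\Gamma}$. Since $\Gamma$ is bipartite, its vertex set splits as $\Gamma_0 = A\sqcup B$ with every edge joining a vertex of $A$ to a vertex of $B$, so that $A$ and $B$ are each independent sets. Because the characteristic polynomial $\chi_{\Gamma}(t)$ is independent of the enumeration of the vertices (as noted in the introduction right after its definition), I may enumerate the vertices of $A$ first and those of $B$ afterwards. With respect to this ordering the adjacency matrix takes the block form
$$
\ad_{\Gamma}=\begin{pmatrix} 0 & M\\ M^{\operatorname{tr}} & 0\end{pmatrix},
$$
where $M$ is the biadjacency matrix recording the edges between $A$ and $B$, and the two diagonal blocks vanish precisely because $A$ and $B$ are independent sets.

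The key step is to conjugate by the sign matrix $D=\operatorname{diag}(I_{|A|},-I_{|B|})$, which negates exactly the coordinates indexed by $B$ and satisfies $D=D^{-1}$. A direct computation then gives
$$
D\,\ad_{\Gamma}\,D^{-1}=\begin{pmatrix} 0 & -M\\ -M^{\operatorname{tr}} & 0\end{pmatrix}=-\ad_{\Gamma}.
$$
Hence $\ad_{\Gamma}$ and $-\ad_{\Gamma}$ are similar, so they share the same characteristic polynomial and therefore the same spectrum. Consequently $\lambda$ is an eigenvalue of $\ad_{\Gamma}$ if and only if $-\lambda$ is an eigenvalue of $-\ad_{\Gamma}$, that is, if and only if $-\lambda$ is an eigenvalue of $\ad_{\Gamma}$, which is the assertion.

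A more transparent alternative is to argue at the level of eigenvectors: writing an eigenvector of $\ad_{\Gamma}$ for $\lambda$ as $v=(x,y)$ with $x$ supported on $A$ and $y$ supported on $B$, the eigenvalue equation $\ad_{\Gamma}v=\lambda v$ is equivalent to the pair $My=\lambda x$ and $M^{\operatorname{tr}}x=\lambda y$. Then $Dv=(x,-y)$ satisfies $M(-y)=-\lambda x$ and $M^{\operatorname{tr}}x=\lambda y=-\lambda(-y)$, so $Dv$ is an eigenvector for $-\lambda$. Since $D$ is invertible, this map carries a basis of the $\lambda$-eigenspace to a basis of the $(-\lambda)$-eigenspace, yielding in fact equality of the two multiplicities. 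I do not anticipate a genuine obstacle here; the only point meriting care is the passage to the block form, which is justified exactly by the enumeration-independence of $\chi_{\Gamma}(t)$, and the observation that a vertex two-colouring of $\Gamma$ is available precisely because $\Gamma$ is bipartite.
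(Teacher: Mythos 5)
Your proof is correct and is essentially the paper's own argument: the paper also puts $\ad_{\Gamma}$ into the block form $\begin{pmatrix}0&B\\ B^{\operatorname{tr}}&0\end{pmatrix}$ and flips the sign of one block of the eigenvector (the paper uses $(-x,y)$ where you use $(x,-y)$, which differ only by an overall sign). Your conjugation by the sign matrix $D$ is just a clean repackaging of that same sign-flip, with the small bonus of making the equality of eigenvalue multiplicities explicit.
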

\begin{proof}
Enumerate the vertices of $\Gamma$ such that its adjacency matrix has the form
$$
\ad_\Gamma=
\begin{pmatrix}
0&B\\
B^T&0
\end{pmatrix}.
$$
Suppose that $\begin{pmatrix}x\\y\end{pmatrix}$ is an eigenvector of $\ad_\Gamma$ with eigenvalue $\lambda$.
Then $\begin{pmatrix}-x\\y\end{pmatrix}$ is an eigenvector of $\ad_\Gamma$ with eigenvalue $-\lambda$.
\end{proof}

The next lemma is due to
Hoffman and Smith (see \cite{HoffmanSmith}).
\begin{lem}
\label{Lemma: Monotonic}
If $k,p_1,\ldots,p_k\in\mathbb N$, $0\leq i\leq k$ and $p_j<p'_j$,
for some $1\leq j\leq k$, then
\begin{enumerate}
\item $\rho\left(\coxwo{\Salem{i}{p_1,\ldots}{p_j}{\ldots,p_k}}\right)
\leq \rho\left(\coxwo{\Salem{i}{p_1,\ldots}{p'_j}{\ldots,p_k}}\right)$ if $j> i$ and
\item $\rho\left(\coxwo{\Salem{i}{p_1,\ldots}{p_j}{\ldots,p_k}}\right)
\geq \rho\left(\coxwo{\Salem{i}{p_1,\ldots}{p'_j}{\ldots,p_k}}\right)$ if $j\leq i$.
\end{enumerate}
Moreover, the equalities hold if and only if the tree $\Salem{i}{p_1,\ldots}{p'_j}{\ldots,p_k}$
is cyclotomic.
\end{lem}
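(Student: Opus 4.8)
The plan is to transfer the entire question from the Coxeter spectral radius to the adjacency spectral radius, where branch-lengthening is classical. Since a tree $\cal T$ is bipartite with colour classes $X,Y$, ordering the vertices so that $X$ precedes $Y$ factors the Coxeter transformation as $\Phi_{\cal T}=\Phi_X\Phi_Y$, the product of the two partial products over the (mutually commuting) reflections in each class. This puts $\Phi_{\cal T}$ in the block form $\left(\begin{smallmatrix}BB^{\mathrm t}-I&-B\\ B^{\mathrm t}&-I\end{smallmatrix}\right)$, where $B$ is the biadjacency matrix, and a Schur-complement computation shows that each eigenvalue $\mu$ of $\Phi_{\cal T}$ satisfies $\mu+\mu^{-1}=\lambda^2-2$ for an eigenvalue $\lambda$ of $\ad_{\cal T}$, and conversely. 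Consequently $\rho(\coxwo{\cal T})=\Psi\!\left(\rho(\ad_{\cal T})\right)$, where $\Psi\equiv1$ on $[0,2]$ and $\Psi(\lambda)=\tfrac12\!\left((\lambda^2-2)+\sqrt{(\lambda^2-2)^2-4}\,\right)$ on $[2,\infty)$ is continuous, nondecreasing, and strictly increasing past $2$. In particular $\cal T$ is cyclotomic exactly when $\rho(\ad_{\cal T})\le2$, so both inequalities and the equality clause reduce to statements about $\rho(\ad)$ with the number $2$ as the cyclotomic threshold.

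Next I dispose of the degenerate cases $k\le2$: up to relabelling these trees are Dynkin diagrams of type $\mathbb A$ or $\mathbb D$ (for instance the join of $\mathbb D_{p_1}$ and $\mathbb A_{p_2}$ is $\mathbb D_{p_1+p_2+1}$) or the Euclidean diagrams $\widetilde{\mathbb D}$ (the join of two $\mathbb D$'s). Each has $\rho(\ad)\le2$, hence is cyclotomic with $\rho(\coxwo{\cal T})=1$, and both assertions hold with equality. I may therefore assume $k\ge3$, so that the central vertex $v$ has degree $k\ge3$.

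For $j>i$ the branch $H_j$ is a pendant path of type $\mathbb A_{p_j}$ ending in a leaf. Replacing $p_j$ by $p_j+1$ appends one leaf, so the smaller tree is a proper connected induced subgraph of the larger; by Perron--Frobenius $\rho(\ad)$ strictly increases, and iterating from $p_j$ up to $p'_j$ and applying $\Psi$ yields assertion (1). Because $\Psi$ is constant below $2$ and strictly increasing above it, equality forces both adjacency radii into $[0,2]$; as the smaller radius is strictly smaller, this occurs precisely when the larger radius is $\le2$, i.e. when $\Salem{i}{p_1,\ldots}{p'_j}{\ldots,p_k}$ is cyclotomic.

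For $j\le i$ the branch $H_j$ is of type $\mathbb D_{p_j}$, a path running from $v$ to a fork vertex of degree $3$; replacing $p_j$ by $p_j+1$ subdivides one of its edges. Since the interior vertices of this path have degree $2$ and its endpoints $v$ and the fork have degree $\ge3$, the subdivided edge lies on an \emph{internal path}, and the Hoffman--Smith theorem \cite{HoffmanSmith} applies: subdivision moves $\rho(\ad)$ strictly toward $2$---decreasing it when $\rho(\ad)>2$, fixing it at $2$, increasing it when $\rho(\ad)<2$---and never crosses the value $2$. Composing with $\Psi$ gives assertion (2). The main obstacle is exactly this non-crossing together with the resulting equality clause: it is what guarantees that $\rho(\ad)\le2$ holds for the larger tree iff it holds for the smaller, so that $\rho(\coxwo{\cal T})$ is unchanged precisely when both trees are cyclotomic---equivalently when the larger one is. I would secure non-crossing by the monotone-sequence argument, iterating the subdivision to obtain a monotone sequence of adjacency radii converging to $\max\{2,\rho(\ad_{{\cal T}^-})\}$, where ${\cal T}^-$ is the tree with $H_j$ removed, so that no finite term can sit on the far side of $2$ from its neighbours; Smith's classification of the graphs with $\rho(\ad)\le2$ as subgraphs of the Euclidean diagrams \cite{smith_some_properties_of_the_spectrum_of_a_graph} then pins down the cyclotomic cases and completes the equality statement.
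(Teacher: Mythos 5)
The paper never actually proves this lemma: it is stated with the attribution ``due to Hoffman and Smith'' and the citation to \cite{HoffmanSmith}, so there is no internal proof to compare against, and your argument is essentially the standard one that lies behind that citation. Your reduction $\rho\left(\coxwo{\cal{T}}\right)=\Psi\left(\rho(\ad_{\cal{T}})\right)$ via the bipartite factorization $\Phi_{\cal{T}}=\Phi_X\Phi_Y$ and the relation $\mu+\mu^{-1}=\lambda^2-2$ is the classical A'Campo-type correspondence for trees (it is presumably also why the paper records the bipartite-symmetry lemma), and it correctly converts both inequalities \emph{and} the equality clause into statements about adjacency radii with $2$ as the cyclotomic threshold, since $\Psi$ is constant on $[0,2]$ and strictly increasing on $[2,\infty)$. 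The case split is also the right one: for $j>i$ the smaller tree is a proper connected subgraph of the larger, so Perron--Frobenius gives strict growth of $\rho(\ad)$, and the shape of $\Psi$ yields exactly the stated equality criterion; for $j\le i$ (after your correct disposal of $k\le2$, where all the trees are of type $\mathbb{A}$, $\mathbb{D}$ or $\widetilde{\mathbb{D}}$ and hence cyclotomic) the lengthened edge lies on the internal path from $v$ to the fork, so the Hoffman--Smith subdivision theorem applies, and the non-crossing of the value $2$ is precisely what makes the equality clause come out as ``iff $\Salem{i}{p_1,\ldots}{p'_j}{\ldots,p_k}$ is cyclotomic'' in both directions.

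One detail needs repair. Your proposed mechanism for non-crossing---iterated subdivision giving adjacency radii converging to $\max\{2,\rho(\ad_{\cal{T}^-})\}$, where $\cal{T}^-$ is the tree with $H_j$ deleted---is not correct as stated: the Hoffman--Smith limit is the larger of the radii of the \emph{two} end-pieces each completed by an infinite ray, which can strictly exceed both $2$ and $\rho(\ad_{\cal{T}^-})$. But you do not need that limit at all; Smith's classification, which you invoke anyway, gives non-crossing directly. For $k\ge3$ and $j\le i$ every tree in the family has two vertices of degree at least $3$, hence is not of type $ADE$ and satisfies $\rho(\ad)\ge2$, with equality exactly for the Euclidean trees $\widetilde{\mathbb{D}}_n$; and subdividing an internal edge produces a $\widetilde{\mathbb{D}}_n$ only if the tree before subdivision was $\widetilde{\mathbb{D}}_{n-1}$. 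Hence $\rho(\ad)>2$ and $\rho(\ad)\le2$ are each preserved under the internal subdivision, which is all the equality clause requires. Relatedly, the clause ``increasing it when $\rho(\ad)<2$'' in your paraphrase of Hoffman--Smith is vacuous here, since a graph containing an internal path is never $ADE$; nothing in your argument rests on it, but it should not be attributed to the theorem in that form.
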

We will also need the following lemma.
\begin{lem}
\label{Lemma:Convergence of roots}
Suppose that $f_n(t)=t^ng(t)+h(t)$ is a sequence of functions such that $g,h$ are continuous,
$f_n(z_n)=0$ for all $n\in\mathbb N$ and that $\displaystyle \lim_{n\to\infty}z_n=z_0$. 
If $|z_0|>1$ then $g(z_0)=0$ while if $|z_0|<1$ then $h(z_0)=0$.
\end{lem}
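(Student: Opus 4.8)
The plan is to exploit the single algebraic relation that the hypothesis gives us at each $z_n$, namely $f_n(z_n)=z_n^n g(z_n)+h(z_n)=0$, which I rewrite as
$$
z_n^n\, g(z_n) = -h(z_n).
$$
The whole argument then reduces to tracking the size of the two sides as $n\to\infty$. By continuity of $g$ and $h$ and the assumption $z_n\to z_0$, both $g(z_n)\to g(z_0)$ and $h(z_n)\to h(z_0)$, so the right-hand side stays bounded (it converges to $-h(z_0)$). Meanwhile the factor $z_n^n$ either blows up or vanishes according to whether $|z_0|$ exceeds or falls below $1$, and this dichotomy is exactly what forces $g(z_0)=0$ or $h(z_0)=0$.

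First I would treat the case $|z_0|>1$ by contradiction. Choose $c$ with $1<c<|z_0|$; since $|z_n|\to|z_0|$, for all large $n$ we have $|z_n|>c$, hence $|z_n^n|=|z_n|^n>c^n\to\infty$. Suppose toward a contradiction that $g(z_0)\neq0$. Then by continuity there is $\delta>0$ with $|g(z_n)|\geq\delta$ for all large $n$, so
$$
|h(z_n)| = |z_n^n\, g(z_n)| = |z_n|^n\,|g(z_n)| \geq c^n\,\delta \longrightarrow \infty.
$$
This contradicts $h(z_n)\to h(z_0)$, which is finite. Hence $g(z_0)=0$.

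Next I would treat the case $|z_0|<1$ directly. Choose $c$ with $|z_0|<c<1$; for all large $n$ we then have $|z_n|<c$, so $|z_n^n|=|z_n|^n<c^n\to0$. Since $g(z_n)\to g(z_0)$ is bounded, the product $z_n^n g(z_n)\to0$, and therefore $h(z_n)=-z_n^n g(z_n)\to0$. Comparing with $h(z_n)\to h(z_0)$ gives $h(z_0)=0$.

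There is no serious obstacle here: the argument is elementary once the defining equation is read as a balance of magnitudes. The only point that needs a little care is to record that $z_n\to z_0$ forces $|z_n|$ to be bounded away from $1$ on the correct side for all sufficiently large $n$ (which is why I introduce the intermediate constant $c$), and to use the continuity of $g$ and $h$ precisely to keep $g(z_n)$ and $h(z_n)$ bounded. Note that the borderline case $|z_0|=1$ is deliberately excluded from the statement, and indeed nothing can be concluded there, since then $|z_n^n|$ need not tend to either $0$ or $\infty$.
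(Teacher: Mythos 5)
Your proof is correct and follows essentially the same route as the paper: both rewrite $f_n(z_n)=0$ as $z_n^n g(z_n)=-h(z_n)$ and use the continuity of $g$ and $h$ together with $|z_n|^n\to\infty$ (resp.\ $\to 0$) to force $g(z_0)=0$ (resp.\ $h(z_0)=0$). The only cosmetic difference is that you phrase the first case as a contradiction while the paper directly shows $|g(z_n)|=|h(z_n)|/|z_n|^n\to 0$ and then invokes continuity.
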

\begin{proof}
Suppose that $|z_0|>1$. The function $h$ is continuous and
$|g(z_n)|=\frac{|h(z_n)|}{|z_n^n|}$. Therefore $\lim_{n\rightarrow\infty}|g(z_n)|=0$.
Since $|g(z_0)|-|g(z_n)|\leq |g(z_0)-g(z_n)|\xrightarrow[n\to\infty]{}0$,
we conclude that $g(z_0)=0$. The proof for the case $|z_0|<1$ is similar.
\end{proof}

\section{Proof of main theorems}
\label{Section:Proof of theorems}
In this section we prove Theorems
\ref{Theorem:Recursive relation for Coxeter polynomials},
\ref{Theorem:Coxeter polynomials of the graphs},
\ref{Theorem:Limit theorem for spectrum radius},
\ref{Theorem:General limit theorem for spectrum radius}
and \ref{Theorem:Multiplicity theorem}.

\begin{proof}[Proof of \cref{Theorem:Recursive relation for Coxeter polynomials}]
For $p_1\geq2$ we split the tree $\Salem{0}{p_1}{\ldots}{p_k}$ by removing the edge $(v_{1,p_1-1},v_{1,p_1})$
and we apply \cref{Proposition:Cut theorem} to get
\begin{align*}
\cox{\Salem{0}{p_1}{\ldots}{p_k}}=&\cox{\mathbb A_1}\cox{\Salem{0}{p_1-1}{\ldots}{p_k}}-t\cox{\Salem{0}{p_1-2}{\ldots}{p_k}}\\
=&(t+1)\cox{\Salem{0}{p_1-1}{\ldots}{p_k}}-t\cox{\Salem{0}{p_1-2}{\ldots}{p_k}}.
\end{align*}
We used that $\cox{\mathbb A_1}=t+1$ which can be easily verified from the definition of the Coxeter polynomial.

For $k\geq2, p_1\geq3$ and $1\leq i\leq k$ if split the tree $\Salem{0}{p_1}{\ldots}{p_k}$
by removing the edge $(v_{1,p_1-2},v_{1,p_1})$ we end up with $\mathbb A_1$ and the join of $i-1$
Dynkin diagrams of type $\mathbb D_{p_2},\ldots,\mathbb D_{p_i}$ and $k-i+1$
Dynkin diagrams of type $\mathbb A_{p_{i+1}},\ldots,\mathbb A_{p_{k}},\mathbb A_{p_{1}-1}$.
We apply \cref{Proposition:Cut theorem} to the edge $(v_{1,p_1-2},v_{1,p_1})$
to get
\begin{align*}
\cox{\Salem{i}{p_1}{\ldots}{p_k}}=&\cox{\mathbb A_1}\left[\cox{\Salem{i-1}{p_2}{\ldots}{p_k,p_1-1}}-t\cox{\Salem{i-1}{p_2}{\ldots}{p_k,p_1-3}}\right].
\end{align*}
\end{proof}
\begin{proof}[Proof of \cref{Theorem:Coxeter polynomials of the graphs}]
For simplicity of notation, we write $u_j,v_j,w_j$ instead
of $v_{1,j},v_{2,j},v_{3,j}$ respectively.

(a) Applying \cref{Proposition:Cut theorem} to the splitting edge $(v,u_1)$ 
of the tree $\Salem{0}{p}{q}{r}$ we get
\begin{gather*}
\cox{\Salem{0}{p}{q}{r}}=\cox{\mathbb A_p}\cox{\mathbb A_{q+r+1}}-t\cox{\mathbb A_{p-1}}\cox{\mathbb A_q}\cox{\mathbb A_r}.
\end{gather*}
The polynomial $\cox{\mathbb A_n}$ can be easily calculated using \cref{Proposition:Cut theorem}.
It satisfies the recurrence 
$$
\cox{\mathbb A_n}=\cox{\mathbb A_{n-1}}+t\left(\cox{\mathbb A_{n-1}}-\cox{\mathbb A_{n-2}}\right)
$$
and is given by the formula $\cox{\mathbb A_{n}}=t^n+t^{n-1}+\ldots+t+1$.

\noindent
Therefore
\begin{gather*}
(t-1)^3\cox{\Salem{0}{p}{q}{r}}=t^{p+q+r+4}-2t^{p+q+r+3}+t^{p+r+2}+t^{q+r+2}-t^{r+2}+\\
t^{p+q+2}-t^{p+2}-t^{q+2}+2t-1\\
\hspace{13ex}=t^{p+q+r+2}(t-1)-t^{r+2}(t^{q}-1)\cox{\mathbb A_{p-1}}+\\
t^2(t^q-1)\cox{\mathbb A_{p-1}}-t+1
\end{gather*}
and hence we get
\begin{eqnarray*}
(t-1)\cox{\Salem{0}{p}{q}{r}}
&=&t^{r+2}\left(t^{p+q}-\cox{\mathbb A_{p-1}}\cox{\mathbb A_{q-1}}\right)\\
&+&t^2\cox{\mathbb A_{p-1}}\cox{\mathbb A_{q-1}}-1\\
&=&t^{r+2}F^{(0)}_{p,q}(t)-\left(F^{(0)}_{p,q}\right)^*(t).
\end{eqnarray*}
For the proof of $i=1,2$ we use the recurrence relation of \cref{Theorem:Recursive relation for Coxeter polynomials}.
For $i=1$, from \cref{Theorem:Recursive relation for Coxeter polynomials} we get that
\begin{align*}
\cox{\Salem{1}{p}{q}{r}}=&(t+1)\left[\cox{\Salem{0}{p-1}{q}{r}}-t\cox{\Salem{0}{p-3}{q}{r}}\right]\\
=&(t+1)t^{r+2}\left[F^{(0)}_{p-1,q}(t)-tF^{(0)}_{p-3,q}(t)\right]\\
-&(t+1)\left[\left(F^{(0)}_{p-1,q}\right)^*(t)-t\left(F^{(0)}_{p-3,q}\right)^*(t)\right]\\
=&(t+1)t^{r+2}\left[F^{(0)}_{p-1,q}(t)-tF^{(0)}_{p-3,q}(t)\right]\\
-&(t+1)\left[F^{(0)}_{p-1,q}(t)-tF^{(0)}_{p-3,q}(t)\right]^*.
\end{align*}
The last equality holds because of the following fact.
For $m_1\geq m_2 \in\mathbb N$ and two polynomials $f,g$ with degrees $\deg{f}=\deg(g)+m_1$
the reciprocal of the polynomial $f(t)+t^{m_2}g(t)$ is
the polynomial $\left(f(t)+t^{m_2}g(t)\right)^*=f^*(t)+t^{m_1-m_2}g^*(t)$.
Therefore to finish the proof for the case $i=1$ it is enough to show that
\begin{align*}
F^{(1)}_{p,q}(t)=F^{(0)}_{p-1,q}(t)-tF^{(0)}_{p-3,q}(t).
\end{align*}
This is an easy verification:
\begin{gather*}
F^{(0)}_{p-1,q}(t)-tF^{(0)}_{p-3,q}(t)=\\
t^{p+q-2}(t-1)-\frac{t^{p-1}-1}{t-1}\cox{\mathbb A_{q-1}}+
t\frac{t^{p-3}-1}{t-1}\cox{\mathbb A_{q-1}}=\\
t^{p+q-2}(t-1)-(t^{p-2}+1)\cox{\mathbb A_{q-1}}.
\end{gather*}

For $i=2$, by \cref{Theorem:Recursive relation for Coxeter polynomials} we get
\begin{align*}
\cox{\Salem{2}{p}{q}{r}}=&(t+1)\left[\cox{\Salem{1}{q}{p-1}{r}}-t\cox{\Salem{1}{q}{p-3}{r}}\right]\\
=&(t+1)t^{r+2}\left[F^{(1)}_{q,p-1}(t)-tF^{(1)}_{q,p-3}(t)\right]\\
-&(t+1)\left[F^{(1)}_{q,p-1}(t)-tF^{(1)}_{q,p-3}(t)\right]^*
\end{align*}
from which follows that to finish the proof for the case $i=2$ is enough to verify that
\begin{align*}
F^{(2)}_{p,q}(t)=F^{(1)}_{q,p-1}(t)-tF^{(1)}_{q,p-3}(t).
\end{align*}

(b) For the Coxeter polynomial $\cox{\Salem{3}{p}{q}{r}}$ we apply
\cref{Proposition:Cut theorem}
to the edge $(w_{r-2},w_{r})$ to obtain
\begin{gather*}
\cox{\Salem{3}{p}{q}{r}}=(t+1)\cox{\Salem{2}{p}{q}{r-1}}-t(t+1)\cox{\Salem{2}{p}{q}{r-3}}.
\end{gather*}
Therefore 
\begin{eqnarray*}
\frac{t-1}{(t+1)^3}\cox{\Salem{3}{p}{q}{r}}&=&\frac{t-1}{(t+1)^2}\cox{\Salem{2}{p}{q}{r-1}}-
t\frac{t-1}{(t+1)^2}\cox{\Salem{2}{p}{q}{r-3}}\\
&=&t^{r+1}F^{(2)}_{p,q}(t)-\left(F^{(2)}_{p,q}\right)^*(t)-
t^{r}F^{(2)}_{p,q}(t)+t\left(F^{(2)}_{p,q}\right)^*(t)
\end{eqnarray*}
and hence we get
\begin{gather*}
\cox{\Salem{3}{p}{q}{r}}=
(t+1)^3\left[t^rF^{(2)}_{p,q}(t)+\left(F^{(2)}_{p,q}\right)^*(t)\right].
\end{gather*}
\end{proof}
\begin{rem}
\label{Remark:Alternative form of the coxeter polynomials of the salem trees}
(a) For the case $i=1$ we could have applied \cref{Proposition:Cut theorem} to the splitting
edge $(u_{p-2},u_{p})$ and use that $\Salem{0}{p}{q}{r}=\Salem{0}{q}{r}{p}$ to obtain
$$
\cox{S^{(1)}_{p,q,r}}=(t+1)\left[t^pF^{(0)}_{q,r}(t)+\left(F^{(0)}_{q,r}\right)^*(t)\right].
$$
Similarly by noting that the graphs $\Salem{1}{p}{r}{q}, \Salem{1}{p}{q}{r}$ are the same
and that the graphs $\Salem{2}{p}{q}{r}, \Salem{2}{q}{p}{r}$ are the same,
\cref{Proposition:Cut theorem} applied to the splitting edge $(v_{q-2},v_{q})$ gives
$$
\cox{S^{(2)}_{p,q,r}}=(t+1)^2\left[t^pF^{(1)}_{q,r}(t)+\left(F^{(1)}_{q,r}\right)^*(t)\right].
$$
(b) Explicitly the polynomials $F^{(i)}_{p,q}(t)$ are
\begin{align*}
F^{(0)}_{p,q}(t)=&\frac{t^p\left(t^{q+2}-2t^{q+1}+1\right)+t^{q}-1}{(t-1)^2},\\
F^{(1)}_{p,q}(t)=&\frac{t^{p-2}\left(t^{q+2}-2t^{q+1}+1\right)-t^{q}+1}{t-1}\\
=&\frac{t^{q}\left(t^{p}-2t^{p-1}-1\right)+t^{p-2}-1}{t-1},\\
F^{(2)}_{p,q}(t)=&t^{p-2}\left(t^{q}-2t^{q-1}-1\right)-t^{q-2}-1.
\end{align*}
\end{rem}

\begin{proof}[Proof of \cref{Theorem:Limit theorem for spectrum radius} ]

(1) From \cref{Theorem:Coxeter polynomials of the graphs} and
\cref{Lemma:Convergence of roots} it is enough to 
show that the sequence $\left(\alpha_r\right)_{r\in\mathbb N}$ defined by
$\alpha_r=\rho\left(\coxwo{\Salem{i}{p}{q}{r}}\right)$,
is convergent (note that from \cref{Remark:Dynkin diagrams},
$\Salem{i}{p}{q}{r}$ are Salem trees and therefore $\alpha_r>1$ for all $r\in\mathbb{N}$).
It follows from \cref{Lemma: Monotonic} that for $i=0,1,2$ the sequence
$\left(\alpha_r\right)_{r\in\mathbb N}$ is increasing.
Since the polynomial $\cox{\Salem{i}{p}{q}{r}}$ is written as
$\cox{\Salem{i}{p}{q}{r}}=t^{r+2}F(t)+G(t)$ where $F(t)$,
$G(t)$ are monic polynomials,
the sequence $\left(\alpha_r\right)_{r\in\mathbb N}$
is also bounded.
For, if $M$ is large enough such that the polynomials 
$F(t), G(t)$ are positive for all $t\geq M$, then $z<M$ 
for all $z\in Z\left(\coxwo{\Salem{i}{p}{q}{r}}\right)$.
Therefore the sequence $\left(\alpha_r\right)_{r\in\mathbb N}$
is indeed convergent.

We now prove that $\rho\left(F^{(i)}_{p,q}\right)$ is a Pisot number
(cf. Lemma 4.3 in \cite{Mckee}).
Let $\epsilon>0$ be small enough and $r$ be large enough such that
$\rho\left(\coxwo{\Salem{i}{p}{q}{r}}\right)>1+\epsilon$ and 
$\left|t^{r+2}F^{(i)}_{p,q}(t)\right|>\left|\left(F^{(i)}_{p,q}\right)^*(t)\right|$
for every $|t|=1+\epsilon$.
From Rouche's theorem (see \cite{rudin}) it follows that
the polynomial $F^{(i)}_{q,r}(t)$ has only one root,
let us say $z_0$, outside the unit circle. If $z_0$ was a Salem number then
we would have $F^*(z_0)=0$ and therefore
$\operatorname{cox}_{\Salem{i}{p}{q}{r}}(z_0)=0$ for all
large $r$, contrary to \cref{Lemma: Monotonic}.
Therefore $z_0=\rho\left(F^{(i)}_{p,q}(t)\right)$ and
$\rho\left(F^{(i)}_{p,q}(t)\right)$ is a Pisot number.

(2) As in (1) we define the sequence $\left(\beta_p\right)_{p\in\mathbb N}$ by
$\beta_p=\rho\left(\coxwo{\Salem{i}{p}{q}{r}}\right)$. Note that 
from \cref{Lemma: Monotonic}, for $i=1,2,3$, the sequence
$\left(\beta_p\right)_{p\in\mathbb N}$ is decreasing.
From \cref{Remark:Alternative form of the coxeter polynomials of the salem trees} it follows that for $i=1,2$
\begin{equation}
\label{Equation:alternative form of coxeter polynomials}
\cox{S^{(i)}_{p,q,r}}=(t+1)^i\left[t^pF^{(i-1)}_{q,r}(t)+\left(F^{(i-1)}_{q,r}\right)^*(t)\right]
\end{equation}
From \cref{Theorem:Coxeter polynomials of the graphs} and from the fact that
$\cox{\Salem{3}{p}{q}{r}}=\cox{\Salem{3}{q}{r}{p}}$ it follows that
(\ref{Equation:alternative form of coxeter polynomials}) holds for $i=3$ also.
Therefore the sequence $\left(\beta_p\right)_{p\in\mathbb N}$ is bounded and
from \cref{Lemma:Convergence of roots} it converges to
$\rho\left(F^{(i-1)}_{q,r}(t)\right)$.

(3) For $q,r\in\mathbb N$ and $i\in\{0,1,2\}$ we define $\ell^{(i)}_{q,r}=\lim_{p\to \infty}\rho(\coxwo{\Salem{i}{p}{q}{r}})$.
By \cref{Lemma: Monotonic}, $\ell_{q,r}$ is monotonic with respect to $q$
From (1) and (2) of this theorem and from the form of the polynomials $F^{(0)}_{q,r}(t), F^{(1)}_{q,r}(t)$,
the sequence $(\ell^{(i)}_{q,r})_{q\in\mathbb N}$ is bounded and therefore convergent
(note that $\ell^{(i)}_{q,r}$ equals $\rho(F^{(0)}_{q,r}(t))$ or $\rho(F^{(1)}_{q,r}(t)))$.
From \cref{Remark:Alternative form of the coxeter polynomials of the salem trees},
\cref{Lemma:Convergence of roots} and the fact that $\ell_{q,r}>1$ we deduce that
$\lim_{p,q\to \infty}\rho\left(\coxwo{\Salem{i}{p}{q}{r}}\right)=\rho\left(t^{r+2}-2t^{r+1}+1\right)$.

(4) The proof for this case is similar to (3).
For $p,q\in\mathbb N$ and $i\in\{1,2,3\}$ we define $\ell^{(i)}_{p,q}=\lim_{r\to \infty}\rho(\coxwo{\Salem{i}{p}{q}{r}})$.
By \cref{Lemma: Monotonic}, $\ell_{p,q}$ is monotonic with respect to $q$.
From (1) and (2) of this theorem and from the form of the polynomials
$F^{(1)}_{p,q}(t), F^{(2)}_{p,q}(t)$
(see \cref{Remark:Alternative form of the coxeter polynomials of the salem trees}),
the sequence $(\ell^{(i)}_{p,q})_{q\in\mathbb N}$ is bounded and therefore convergent
($\ell^{(i)}_{p,q}$ is equal to $\rho(F^{(1)}_{p,q}(t))$ or $\rho(F^{(2)}_{p,q}(t))$).
From \cref{Lemma:Convergence of roots} and the fact that $\ell_{p,q}>1$ we deduce that
$\lim_{q,r\to \infty}\rho\left(\coxwo{\Salem{i}{p}{q}{r}}\right)=\rho\left(t^{p}-2t^{p-1}-1\right)$.

(5) Case $i=0$ was proved by Lakatos in \cite{lakatos}
and therefore we only consider the cases $i=1,2,3$.
Let $\ell^{(i)}_p=\lim_{q,r\to\infty}\rho \left(\coxwo{\Salem{i}{p}{q}{r}}\right)$.
From (4), $\ell^{(i)}_p=\rho(H(t))$ where $H(t):=t^{p}-2t^{p-1}-1$.
Hence $\lim_{p,q,r\to\infty} \rho\left(\coxwo{\Salem{i}{p}{q}{r}}\right)=\lim_{p\to\infty}\rho(H(t))=2$.
\end{proof}

\begin{proof}[Proof of \cref{Theorem:General limit theorem for spectrum radius} ]
For $i\in\{0,1,\ldots,k-1\}$ we have
$$
\cox{\Salem{i}{p_1}{\ldots}{p_k}}=\frac{t^{p_k+1}F(t)-F^*(t)}{t-1}
$$
where 
$$
F(t)=\cox{\Salem{i}{p_1}{\cdots}{p_{k-1}}}-\cox{\mathbb D_{p_1}(t)}\ldots
\cox{\mathbb D_{p_i}}\cox{\mathbb A_{p_{i+1}}}\ldots\coxwo{\mathbb A_{p_{k-1}}}.
$$
Since the Coxeter polynomials of the trees $\Salem{i}{p_1}{\ldots}{p_k}$
and $\mathbb{D}_{p_j}, \mathbb{A}_{p_j}$ are self-reciprocal
(see \cref{Corollary} (c)) the following relation holds
$$
F^*(t)=\cox{\Salem{i}{p_1}{\cdots}{p_{k-1}}}-t\cox{\mathbb D_{p_1}(t)}\ldots
\cox{\mathbb D_{p_i}}\cox{\mathbb A_{p_{i+1}}}\ldots\coxwo{\mathbb A_{p_{k-1}}}.
$$
\Cref{Proposition:Cut theorem} applied to the splitting edge $(v,v_{k,1})$ yields
\begin{gather*}
\cox{\Salem{i}{p_1}{\ldots}{p_k}}=\cox{\Salem{i}{p_1}{\ldots}{p_{k-1}}}\cox{\mathbb A_{p_k}}-\\
t\cox{\mathbb D_{p_1}(t)}\ldots\cox{\mathbb D_{p_i}}\cox{\mathbb A_{p_{i+1}}}\ldots\coxwo{\mathbb A_{p_{k-1}}}\coxwo{\mathbb A_{p_{k}-1}}\\
=\cox{\Salem{i}{p_1}{\ldots}{p_{k-1}}}\frac{t^{p_k+1}-1}{t-1}-\\
t\cox{\mathbb D_{p_1}(t)}\ldots\cox{\mathbb D_{p_i}}\cox{\mathbb A_{p_{i+1}}}\ldots\coxwo{\mathbb A_{p_{k-1}}}\frac{t^{p_k}-1}{t-1}
\end{gather*}
which is exactly the polynomial $\frac{t^{p_k+1}F(t)-F^*(t)}{t-1}.$

Therefore $ \lim_{p_k\to\infty}\rho\left(\coxwo{\Salem{i}{p_1}{\ldots}{p_k}}\right)=\rho(F).$
Similar formulas hold for $i=k$ and inductively we show that
$$
 \lim_{p_{2},\ldots,p_k\to\infty}\rho\left(\coxwo{\Salem{i}{p_1}{\ldots}{p_k}}\right)=\rho(G)
$$
where the polynomial $G(t)$ is given by
$$
G(t)=
\begin{cases}
t^{p_1}-(k-1)t^{p_1-1}-k+2, \ \ \text{if } i\neq0,\\
t^{p_1+1}-(k-1)t^{p_1}+k-2, \ \ \text{if } i=0.
\end{cases}
$$
Hence 
$$
 \lim_{p_{1},p_{2},\ldots,p_k\to\infty}\rho\left(\coxwo{\Salem{i}{p_1}{\ldots}{p_k}}\right)=k-1.
$$
\end{proof}

\begin{figure}[ht]
\centering 
\begin{picture}(150,140)
\put(12,125){$v$}
\put(20,120){\circle{5}}
\put(22.5,120){\line(1,0){104}}
\put(129,120){\circle{5}}
\put(125,112){$v_1$}
\put(125,140){$\cal{T}^{(1)}$}
\put(130,120){\circle{35}}
\put(22,118){\line(2,-1){91}}
\put(115,71){\circle{5}}
\put(115,70){\circle{35}}
\put(107,62){$v_2$}
\put(110,91){$\cal{T}^{(2)}$}
\put(45,60){$\iddots$}
\put(20,118){\line(0,-1){99}}
\put(20,16){\circle{5}}
\put(20,15){\circle{35}}
\put(8,11){$v_n$}
\put(40,14){$\cal{T}^{(i)}$}
\end{picture}
\caption{The join of the graphs $\cal{T}^{(1)},\ldots,\cal{T}^{(i)}$} 
\label{Fig:Gamma} 
\end{figure}
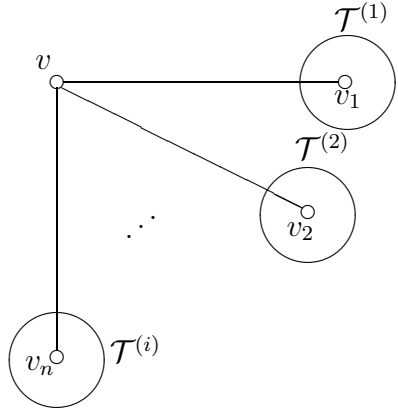

\begin{proof}[Proof of \cref{Theorem:Multiplicity theorem} ]
Let $\cal{T}^{(i)}=(\cal{T}^{(i)}_0,\cal{T}^{(i)}_1)$ where $\cal{T}^{(i)}_0$ is the set of the vertices of $\cal{T}^{(i)}$.
We denote by $\cal{T}^{[i]}$ the join of the graphs $\cal{T}^{(1)},\ldots,\cal{T}^{(i)}$ at the vertices
$v_i\in\cal{T}^{(i)}_0$. The graph $\cal{T}^{(i)}$ looks like the one in \cref{Fig:Gamma}.

Let $i\in\{2,3,\ldots,k\}$. Applying \cref{Proposition:Cut theorem} to the edge $(v,v_i)$ we get
$$
\cox{\cal{T}^{[i]}}=\cox{\cal{T}^{[i-1]}}\cox{\cal{T}^{(i)}}-
t\cox{\cal{T}^{(1)}}\ldots\cox{\cal{T}^{(i-1)}}\cox{\widetilde{\cal{T}^{(i)}}},
$$
where we denote by $\widetilde{\cal{T}^{(i)}}$ the induced subgraph of $\cal{T}^{(i)}$ with the set of vertices
$\widetilde{\cal{T}^{(i)}}_0=\cal{T}^{(i)}_0\setminus\{v_i\}$.

Let us write $P_k(t)=\cox{\cal{T}^{(1)}}\ldots\cox{\widetilde{\cal{T}^{(i)}}}\ldots\cox{\cal{T}^{(k)}}$. Then we have
\begin{align*}
\cox{\cal{T}^{[k]}}=&\cox{\cal{T}^{[k-1]}}\cox{\cal{T}^{(k)}}-tP_k(t)\\
=&\cox{\cal{T}^{[k-2]}}\cox{\cal{T}^{(k-1)}}\cox{\cal{T}^{(k)}}-\\
t\cox{\cal{T}^{(1)}}\ldots&\cox{\cal{T}^{(k-2)}}\cox{\widetilde{\cal{T}^{(k-1)}}}\cox{\cal{T}^{(k)}}-tP_k(t)\\
=&\cox{\cal{T}^{[k-2]}}\cox{\cal{T}^{(k-1)}}\cox{\cal{T}^{(k)}}-t(P_{k-1}(t)+P_k(t))\\
&\ldots\\
=&\cox{\cal{T}^{[0]}}\cox{\cal{T}^{(1)}}\ldots\cox{\cal{T}^{(k)}}-t(P_1(t)+\ldots+P_k(t))\\
=&(t+1)\cox{\cal{T}^{(1)}}\ldots\cox{\cal{T}^{(k)}}-t(P_1(t)+\ldots+P_k(t)).
\end{align*}
Since $z$ is a root of the polynomial $P_i(t)$ of multiplicity $m-m_i$, the theorem follows.
\end{proof}

\vspace{2ex}
\textbf{Acknowledgments:}
I would like to acknowledge the many helpful suggestions of my Ph.D. 
thesis advisor, professor Pantelis Damianou, during the preparation of this paper.
I would also like to thank the anonymous re\-fe\-ree for his constructive comments
and also professor Daniel Simson for
the careful reading of the paper and for his valuable comments and suggestions,
which significantly contributed to improving its quality.

\def\polhk#1{\setbox0=\hbox{#1}{\ooalign{\hidewidth
  \lower1.5ex\hbox{`}\hidewidth\crcr\unhbox0}}}

\end{document}